\RequirePackage{silence}
\WarningFilter{hyperref}{Token}

\documentclass[12pt,a4paper]{article}
\usepackage{amsfonts}
\usepackage{amsthm}
\usepackage{amsmath}
\usepackage{amscd}
\usepackage[latin2]{inputenc}
\usepackage{t1enc}
\usepackage[mathscr]{eucal}
\usepackage{indentfirst}
\usepackage{graphicx}
\usepackage{graphics}
\usepackage{pict2e}
\usepackage{epic}
\usepackage{mathtools}
\usepackage{amssymb}
\usepackage{wrapfig}
\usepackage{thmtools}
\usepackage{thm-restate}
\usepackage[hidelinks]{hyperref}
\usepackage{cleveref}

\numberwithin{equation}{section}
\usepackage[margin= 2.2 cm, bottom=19.5mm,footskip=12mm,top=20mm]{geometry}

\usepackage{epstopdf}

\setlength\parindent{0pt}
\setlength{\parskip}{\smallskipamount} 

\theoremstyle{plain}
\newtheorem{Th}{Theorem}[section]
\newtheorem{Lemma}[Th]{Lemma}
\newtheorem{Cor}[Th]{Corollary}
\newtheorem{Prop}[Th]{Proposition}
\newtheorem{Cl}{Claim}

 \theoremstyle{definition}
\newtheorem{Def}[Th]{Definition}

\newtheorem{Rem}[Th]{Remark}
\newtheorem{?}[Th]{Problem}

\newtheorem*{Def*}{Definition}
\newtheorem*{Rem*}{Remark}

\providecommand{\keywords}[1]{\textbf{\textit{Keywords: }} #1}

\usepackage{tikz}
\usepackage{mathdots}
\usepackage{xcolor}
\usepackage{diagbox}
\usepackage{colortbl}
\usepackage[absolute,overlay]{textpos}
\usetikzlibrary{calc}
\usetikzlibrary{decorations.pathreplacing}
\usetikzlibrary{positioning,patterns}
\usetikzlibrary{arrows,shapes,positioning}
\usetikzlibrary{decorations.markings}

\tikzstyle{edge}=[very thick]

\title{The size-Ramsey number of short subdivisions}
\author{%
  Nemanja Dragani\'c \thanks{Department of Mathematics, ETH, 8092 Z\"urich, Switzerland. Email: \href{mailto:nemanja.draganic@math.ethz.ch} {\nolinkurl{nemanja.draganic@math.ethz.ch}}.}
\and Michael Krivelevich \thanks{School of Mathematical Sciences, Sackler Faculty of Exact Sciences, Tel Aviv University, Tel Aviv 6997801, Israel. Email: \href{mailto:krivelev@tauex.tau.ac.il}{\nolinkurl{krivelev@tauex.tau.ac.il}}. Supported in part by USA-Israel BSF grant 2018267, and by ISF grant 1261/17.}
\and 
Rajko Nenadov \thanks{Department of Mathematics, ETH, 8092 Z\"urich, Switzerland. Email: \href{mailto:rajko.nenadov@math.ethz.ch} {\nolinkurl{rajko.nenadov@math.ethz.ch}}.}
}

\begin{document}
\maketitle

\begin{abstract}
    The $r$-size-Ramsey number $\hat{R}_r(H)$ of a graph $H$ is the smallest number of edges a graph $G$ can have such that for every edge-coloring of $G$ with $r$ colors there exists a monochromatic copy of $H$ in $G$. 
    For a graph $H$, we denote by $H^q$ the graph obtained from $H$ by subdividing its edges with $q{-}1$ vertices each. In a recent paper of Kohayakawa, Retter and R{\"o}dl, it is shown that for all constant integers $q,r\geq 2$ and every graph $H$ on $n$ vertices and of bounded maximum degree, the $r$-size-Ramsey number of $H^q$ is at most $(\log n)^{20(q-1)}n^{1+1/q}$, for $n$ large enough.
    We improve upon this result using a significantly shorter argument by showing that  $\hat{R}_r(H^q)\leq O(n^{1+1/q})$ for any such graph $H$. 
\end{abstract}
\keywords{Ramsey theory, random graphs, subdivisions}

\section{Introduction}
Given a graph $H$ and an integer $r\geq 2$, how few vertices can a graph $G$ have, so that however we color its edges with $r$ colors, we can find a monochromatic copy of $H$ in $G$? The answer to this question (which is indeed a finite number), is denoted by ${R}_r(H)$ and is called the $r$-Ramsey number of $H$. It is named after Frank P. Ramsey who was the first to study this question in his seminal paper \cite{ramsey2009problem}.
Following the given definition, we say that a graph $G$ is $r$-Ramsey for a graph $H$ and write $G\xrightarrow{}(H)_r$, if for any $r$-edge-coloring of $G$ there is a monochromatic copy of $H$ in $G$. Therefore we can write
\[R_r(H)=\min\left\{|V(G)|:G\xrightarrow{}(H)_r\right\}.\]
Note that if the minimum on the RHS is attained for a graph $G$, then it is also attained by a complete graph with $|V(G)|$ vertices. Can we find a graph which can have more vertices, but has fewer edges and is still $r$-Ramsey for $H$? How many edges suffice to construct a graph which is $r$-Ramsey for $H$? The answer to the latter question is called the $r$-\emph{size-Ramsey number} and is denoted by $\hat{R}_r(H)$:
\[\hat{R}_r(H)=\min\left\{|E(G)|:G\xrightarrow{}(H)_r\right\}.\]
This notion was introduced in 1978 by Erd\H{o}s, Faudree, Rousseau and Schelp \cite{erdHos1978size} and since then it has been the subject of extensive research.

Similar notions for measuring minimality of the host graph which is $r$-Ramsey for $H$ have also been studied. Some of them are Folkman numbers, chromatic-Ramsey numbers, degree-Ramsey numbers and so on. We refer the reader to a survey by Conlon, Fox and Sudakov \cite{conlon2015recent} for a recent thorough treatment of the topic. In this paper we will be concerned with $r$-size-Ramsey numbers.

\par  Beck \cite{beck1983size} showed that paths have linear 2-size-Ramsey number, or more precisely, he showed that for any sufficiently large $n$ we have $\hat{R}_2(P_n)\leq 900n$. His arguments can be easily extended to show that, more generally, it holds that $\hat{R}_r(P_n)=O_r(n)$ for any fixed $r\geq 2$. He also asked if $\hat{R}_2(H)$ grows linearly (in the number of vertices) for graphs with bounded maximum degree. This was proven to be true (for any constant number of colors) for trees by Friedman and Pippenger \cite{friedman1987expanding} and for cycles by Haxell, Kohayakawa and {\L}uczak \cite{haxell1995induced}.
In general, Beck's conjecture is not true, as R{\"o}dl and Szemer{\'e}di \cite{rodl2000size} showed that there exists a constant $c > 0$ such that for every sufficiently large $n$ there exists a graph $H$ with $n$ vertices and maximum degree $3$ for which $\hat{R}_2(H) \geq  n \log^ c n$. 

When it comes to more precise bounds, depending on the number of colors, Dudek and Pralat \cite{dudek2017some} showed that $\hat{R}_r(P_n)\geq \Omega(r^2)n$, and this is almost optimal as Krivelevich \cite{krivelevich2019long} showed that $\hat{R}_r(P_n)\leq O(r^2\log r)n$. In subsequent papers \cite{bal2019new,dudek2018note} their bounds are improved by constant factors. Moreover, when $r=2$, the bounds were gradually improved in a series of papers, see \cite{bal2019new, beck1983size, bollobas1986extremal, dudek2017some} for lower bounds, and \cite{beck1983size,bollobas2001random,dudek2015alternative,dudek2017some, letzter2016path} for upper bounds. The current best bounds  are $(3.75-o(1))n\leq \hat{R}_2(P_n) \leq 74n$. For results on size-Ramsey numbers of powers of paths, see \cite{clemens2019size,han2018multicolour}, and for graphs with bounded treewidth, see the recent paper \cite{kamcev2019}. For results concerning general trees, see the paper by Dellamonica \cite{dellamonica2012size}.

 The mentioned result for cycles by Haxell, Kohayakawa and {\L}uczak uses the regularity lemma and makes no attempt to optimize the constant. Their result was improved by Javadi, Khoeini, Omidi and Pokrovskiy \cite{javadi2019size} by using different arguments. In particular, they prove that $\hat{R}_2(C_n)\leq 843\times 10^6n$, for $n$ large enough. 

\subsection{Size-Ramsey numbers of subdivisions of graphs}

At the moment, a satisfactory result about size-Ramsey numbers of general bounded degree graphs seems out of reach. A natural step in this direction is understanding subdivisions of bounded degree graphs. For a graph $H$, let $H^q$ be the graph obtained from $H$ by subdividing each of its edges with $q-1$ vertices (instead of each edge, we have a path of length $q$). Pak \cite{pak2002mixing} conjectured that for a given graph $H$ with $n$ vertices and constant maximum degree, and for $q=\Omega(\log n)$, the $2$-size-Ramsey number of $H^q$ is linear in $|V(H^q)|$. By using random walks on expanders, he was able to show this up to a polylogarithmic factor. In the special case where $H$ is fixed and $q=q(n)$ grows with $n$, the conjecture was resolved by Donadelli, Haxell and Kohayakawa \cite{donadelli2005note}.

On the other hand, complementing the work of Pak, size-Ramsey numbers of short subdivisions of bounded degree graphs were studied in a recent paper by Kohayakawa, Retter and R\"odl \cite{kohayakawa2019size}. They showed that if $q$ and $r$ are constant, then $\hat{R}_r(H^q)\leq (\log n)^{20(q-1)}n^{1+1/q}$ for large enough $n$. In fact, they prove a universality result, which is usually much harder than finding one fixed monochromatic copy in the graph we color. To state it, they use the following definition:

\begin{Def*}(Universal size-Ramsey number).
For positive integers $D, q,r,n$, let the \emph{universal size-Ramsey number} $USR(D,q,r,n)$ be the smallest number of edges a graph $G$ can have such that 
\[G\xrightarrow{}(H^q)_r\qquad \text{for all graphs } H \text{ on } n \text{ vertices and } \Delta(H)\leq D.\]
\end{Def*}

Kohayakawa, Retter and R\"odl show that $USR(D,q,r,n)\leq (\log n)^{20(q-1)}n^{1+1/q}$, for large enough $n$. They also obtain a lower bound:
    \[n^{1+1/q-2/(Dq)+o(1)}\leq USR(D,q,1,n)\leq USR(D,q,r,n).\]
which shows that their result is almost tight. They conjecture that the power of the logarithm in the upper bound can be substantially reduced, so that it does not depend on $q$, or even that it can be entirely removed.
  
\par In this paper we remove the polylogarithmic factor from their upper bound on the universal size-Ramsey number, thus also improving their bound on the size-Ramsey number of short subdivisions. We do so by using a significantly shorter argument. Our claim follows directly from the next theorem which we prove in the third section, after giving some preliminary results in Section 2. For proving upper bounds on size-Ramsey numbers very often random graphs play an important role; we show that random graphs with appropriate parameters are $r$-Ramsey for $H^q$.  
\begin{restatable}[]{Th}{main}\label{main}
Let $D,q,r\geq 2$ be positive integers. There exist positive constants $c,\mu_0$ such that whp for $p=cn^{-1+1/q}$ the random graph $G\sim G(n,p)$ is $r$-Ramsey for $H^q$, for every graph $H$ with $\mu_0n$ vertices and maximum degree $D$.
\end{restatable}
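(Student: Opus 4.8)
The plan is to show that $G \sim G(n,p)$ with $p = cn^{-1+1/q}$ is $r$-Ramsey for $H^q$ by exploiting the expansion and pseudorandomness properties that such a random graph possesses whp. The key structural observation is that $H^q$ consists of $O(n)$ ``branch vertices'' (the images of $V(H)$) together with paths of length exactly $q$ connecting them according to the adjacency structure of $H$. So an embedding of $H^q$ into a monochromatic subgraph of $G$ amounts to first placing the branch vertices and then linking prescribed pairs of them by internally disjoint monochromatic paths of length $q$. Since $\Delta(H) \le D$, each branch vertex needs only $D$ such paths emanating from it, so the total number of paths to be built is $O(Dn)$, which is linear.

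**First I would** fix an arbitrary $r$-edge-coloring of $G$ and pass to the majority color, which retains at least a $1/r$-fraction of the edges; call this graph $G'$. By the chosen density, $G'$ still has $\Omega(n^{1+1/q})$ edges, and the crux is to argue that $G'$ inherits enough expansion to perform the linking. **The main technical step** is a connection/path-embedding lemma: given disjoint ``reservoir'' sets of appropriate size, any pair of vertices (or small sets) can be joined by a path of length exactly $q$ through fresh vertices, and crucially this can be done greedily and sequentially without the paths colliding. This is where I expect the preliminary results of Section~2 to be used — presumably some statement guaranteeing that in a subgraph of $G$ of linear density, short paths of the right length exist between most pairs, together with a ``local resilience'' or expansion property robust to the adversarial coloring. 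The parameter $q$ enters precisely here: with $p = cn^{-1+1/q}$ the expected number of length-$q$ walks between two fixed vertices is $\Theta(1)$ up to constants, so $q$ is exactly the threshold length at which such connections become abundant.

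**The hard part will be** controlling the dependencies so that the $O(n)$ paths can be embedded one after another without exhausting the available vertices, since each embedded path forbids its internal vertices from future use. I would handle this by setting aside the branch vertices first (using the pseudorandomness of $G'$ to embed $V(H)$ into a well-distributed set where every required pair has many common ``reachable'' neighborhoods), and then embedding the connecting paths sequentially, arguing that at each step the number of already-used vertices is $o(n)$ relative to the reservoir so that the expansion/connection property still applies to the residual graph. A clean way to organize this is a Friedman–Pippenger-style embedding-into-expanders argument adapted to the colored setting, or a direct absorption-free sequential embedding where one proves that deleting a sublinear set of vertices preserves the key connectivity estimate. The linearity of the number of paths is what makes this sequential strategy feasible and is the ultimate reason the polylogarithmic factor of Kohayakawa--Retter--R\"odl can be removed.
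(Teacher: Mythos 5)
Your second half (the sequential embedding: reservoir sets, greedy linking of each newly placed branch vertex to its already-embedded neighbors by internally disjoint paths of length $q$, and the observation that only a sublinear set of vertices ever gets blocked) is in the same spirit as the paper's Lemmas~\ref{robust} and~\ref{algorithm}, and that part of the plan is sound. The genuine gap is in your first step: you pass to the majority color and assert that this graph ``inherits enough expansion to perform the linking.'' This is precisely the crux of the theorem, and it does not follow from the pseudorandomness of $G(n,p)$: a subgraph of $G(n,cn^{-1+1/q})$ containing a $1/r$-fraction of the edges is chosen \emph{adversarially}, and no standard resilience statement guarantees that most pairs of its vertices are joined by paths of length exactly $q$. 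Note that at this density the expected number of length-$q$ paths between a fixed pair is only $\Theta(1)$, so the connection property is extremely fragile, and the coloring can correlate the discarded edges with the structure of $G$ in ways that a naive union bound or a generic ``local resilience'' argument cannot handle. Your proposal defers this entire difficulty to unnamed preliminary results, which is a missing idea rather than a routine step.

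The paper never takes a majority color. Instead it applies a sparse version of Szemer\'edi's regularity lemma together with Tur\'an's and Ramsey's theorems on the cluster graph (Proposition~\ref{Graph}) to extract, for some color, a monochromatic configuration of $K=(q-1)D+1$ clusters: one common cluster $V_1$ and $D$ chains of $q-1$ further clusters each, with all relevant pairs $(\varepsilon,p')$-regular, prescribed edge counts, and bounded maximum degree. Since sparse regularity by itself is too weak for embedding, the second key input is the theorem of Gerke, Kohayakawa, R\"odl and Steger (Lemma~\ref{Angelika}, applied through Corollary~\ref{angelika2}) stating that, inside a random graph, every lower-regular chain is ``expanding,'' i.e.\ almost every start vertex reaches almost all of $V_1$ by paths of length $q$ through the chain. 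Only after this does the paper run the greedy embedding you describe, with each pair $(A,B_t)$ certified ``good'' (Definition~\ref{goodness}) so that Lemma~\ref{robust} makes the expansion robust against the sublinear set of occupied vertices. In short: your embedding phase matches the paper's, but you are missing the mechanism (sparse regularity plus Ramsey plus inheritance of regularity/expansion inside random graphs) that produces a monochromatic structure one can actually embed into; without it, the claim that the majority color class has the required length-$q$ connection property is unsupported.
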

\begin{Rem*}
A simple first moment argument shows that this result is almost optimal in terms of $p$, as for $p\ll n^{-1+1/q-2/(Dq)}$ and for a $D$-regular graph $H$ on $\mu_0 n$ vertices, $G\sim G(n,p)$ does not contain $H^q$ whp. Indeed, the expected number of labeled copies of such a graph $H^q$ is less than $n^{n_0}p^{e_0}$, where $n_0=\frac{\mu_0nD}{2}(q-1)+\mu_0 n$ and $e_0=\frac{\mu_0 nD}{2}q$ are the number of vertices and edges of $H^q$ (respectively). When $p$ is small as mentioned, the expectation tends to zero, hence by Markov's inequality there is no copy of $H^q$ in $G(n,p)$ whp. In other words, there is a small gap in terms of $p$ between containing a single fixed copy of a graph from our class, and being $r$-Ramsey for all graphs in the class.
\end{Rem*}

Since $G\sim G(n,p)$ has at most $2\binom{n}{2}p$ edges whp, for $p$ as specified in \Cref{main}, we obtain the following result:

\begin{Cor} 
For all $D,q,r,n$ and every graph $H$ on $n$ vertices and with $\Delta(H)<D$, it holds: 
\[\hat{R}_r(H^q)\leq USR(D,q,r,n)\leq cn^{1+1/q}\]
for a constant $c=c(D,q,r)$.
\end{Cor}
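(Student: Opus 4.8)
The plan is to show that a random graph $G\sim G(n,p)$ with $p=cn^{-1+1/q}$ is, with high probability, $r$-Ramsey for every bounded-degree $H^q$ simultaneously. Since $H^q$ is built by replacing each edge of $H$ by a path of length $q$, the natural strategy is to first reduce the problem to finding an embedding mechanism that is robust against colorings, and then to locate, for any fixed $r$-coloring, a monochromatic "connecting gadget" between prescribed pairs of branch vertices. Concretely, I would reserve a set of candidate branch vertices in $G$ playing the role of $V(H)$, and for every potential edge of $H$ try to route a monochromatic path of length exactly $q$ between the corresponding branch vertices through the rest of $G$. The key structural fact I expect to need is that in $G(n,p)$ with this density, between almost any pair of vertices there are many internally disjoint paths of length $q$, so that after an adversary uses $r$ colors one color class still contains such a path; this is where I would invoke whatever counting or expansion lemma was set up in Section~2.

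The main steps, in order, would be as follows. First, fix an arbitrary $r$-edge-coloring of $G$; by pigeonhole, restrict attention to one color class, but since the target $H$ is arbitrary we cannot fix the color in advance, so instead I would argue that for \emph{every} coloring there is a \emph{choice} of branch-vertex assignment and path-colors making the embedding go through. Second, I would embed the branch vertices of $H$ greedily one at a time, maintaining the invariant that each unembedded vertex still has many admissible images; this is the Friedman--Pippenger / expander-embedding philosophy, adapted so that the "edges" of $H$ are realized not by single edges but by monochromatic length-$q$ paths. Third, for each edge of $H$ processed in turn, I would use the abundance of short paths together with a Ramsey-type pigeonhole on the $\binom{\text{paths}}{}$ between the two fixed branch images to extract one path that is monochromatic and internally disjoint from everything used so far. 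Throughout, the degree bound $\Delta(H)\le D$ keeps the number of paths meeting any vertex bounded, so the disjointness constraints remain satisfiable.

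The hard part, and the crux of the whole argument, will be guaranteeing a \emph{monochromatic} connecting path rather than merely any path of length $q$. Finding one path of length $q$ between two vertices is easy at this density, but an adversarial coloring can destroy any particular path; I need that the number of internally disjoint length-$q$ paths between the relevant pairs is so large (at least something like $r^{\Theta(q)}$ times the number of constraints) that after splitting into $r$ color classes one class survives, and moreover survives simultaneously for all $\mu_0 n D/2$ edges of $H$ under a single coloring. Controlling this simultaneously, while preserving internal disjointness across all the routed paths, is the delicate accounting; I expect to handle it by a union bound over colorings combined with a concentration statement for the number of disjoint short paths, using the fact that $p=cn^{-1+1/q}$ is precisely the threshold at which the expected number of length-$q$ paths between a fixed pair is a large constant power of the relevant logarithmic or constant factor. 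Choosing $c$ and $\mu_0$ large and small respectively gives enough slack to absorb the $r$-fold loss and the $o(1)$ failure probabilities.
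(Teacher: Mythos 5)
Your overall reduction is the right one (and matches the paper): prove that $G\sim G(n,p)$ with $p=cn^{-1+1/q}$ is whp simultaneously $r$-Ramsey for all bounded-degree $H^q$, then use the whp bound $e(G)\leq 2\binom{n}{2}p=O(n^{1+1/q})$ and rescale constants. But your proposed proof of that Ramsey statement has a fatal gap exactly at the step you yourself identify as the crux: producing \emph{monochromatic} connecting paths. Pigeonhole over a large family of internally disjoint length-$q$ paths between two branch images does not yield a monochromatic path: each such path has $q\geq 2$ edges, so an adversary kills any individual path using only two colors on it, and no abundance of paths repairs this pointwise. Worse, at this density there is no abundance to speak of: the expected number of length-$q$ paths between a fixed pair is $\approx n^{q-1}p^q=c^q$, a constant, so the count is Poisson-like, not concentrated, and a positive fraction of pairs have no such path at all. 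Your fallback, a union bound over colorings, also cannot work: the adversary colors \emph{after} seeing $G$, and even if you union bound over all $r^{\binom{n}{2}}$ colorings of $K_n$, the failure probability for a single fixed coloring is at least $e^{-O(n^{1+1/q}\log n)}$ (e.g.\ the probability that $G$ simply misses the required structure), which is nowhere near small enough to beat $r^{\binom{n}{2}}=e^{\Theta(n^2)}$. This is the standard obstruction that makes sparse Ramsey statements immune to first-moment arguments.

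What is missing is a mechanism that secures monochromaticity \emph{structurally, once and for all}, before any path-finding. The paper does this via the sparse regularity method: for every coloring, one color class contains a star-like configuration of $(q-1)D+1$ vertex sets forming regular pairs (Proposition \ref{Graph}, proved by sparse regularity plus Tur\'an and Ramsey applied to the reduced graph); the chains in this configuration are expanding by the Gerke--Kohayakawa--R\"odl--Steger lemma (Lemma \ref{Angelika}, Corollary \ref{angelika2}); and only then is $H^q$ embedded greedily inside this single monochromatic subgraph (Lemmas \ref{B'}, \ref{robust}, \ref{algorithm}), where your Friedman--Pippenger-style bookkeeping does have a correct analogue. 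Without a regularity-type (or container/transference-type) step that hands you a monochromatic host structure valid for all colorings simultaneously, the greedy routing scheme you describe cannot get off the ground.
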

\textbf{Notation.}  Let $G=(V,E)$ be a graph and $V_1, V_2\subseteq V$ and $E_1\subset E$. We denote by $G[V_1]$ the subgraph of $G$ induced by $V_1$, by $G[E_1]$ the graph $(V,E_1)$, and by $N_{V_2}(V_1)$ the set of vertices in $V_2$ which are adjacent to at least one vertex in $V_1$. We also use the notation $N_G(V_1):=N_V(V_1)$ (we omit $G$ in the subscript when it is unambiguous) and $N_{V_2}(v):=N_{V_2}(\{v\})$ for $v\in V$. We denote by $G(n,p)$ the binomial random graph, i.e.\ the probability space of graphs on n vertices where each pair of vertices forms an edge independently with probability $p$.
We say that $G(n,p)$ satisfies a property with high probability (whp) if a graph sampled from $G(n,p)$ satisfies this property with probability tending to $1$ as $n$ tends to infinity. We use standard Landau notations $O(\cdot),o(\cdot), \omega(\cdot), \Omega(\cdot)$. With $\log n$ we denote the natural logarithm of $n$. We omit floors and ceils whenever they are not essential. For integers $m,n$ we write $n\gg m$ when we want to say that $n$ is large enough in comparison with $m$, but the exact dependency is not essential.

\section{Preliminaries}

Our proof will depend on finding many random-like bipartite graphs in a monochromatic subgraph of the graph we color. We describe what it means to be random-like in the following two lemmas, through the notion of regularity.
\begin{Def*}
    Given a graph $G$ and disjoint subsets $U, W \subset V (G)$, we
say that the pair $(U, W)$ is $(G, \varepsilon, p)$-\emph{regular} for some $\varepsilon,p \in (0, 1)$ if 
$$ 
    \Big| \frac{e_G(U,W)}{|U||W|}-\frac{e_G(U',W')}{|U'||W'|}\Big|<\varepsilon p
$$
for every  $U'\subset U$ of size $|U'| \geq \varepsilon|U|$, and $W'\subset W$ of size $|W'| \geq \varepsilon|W|$. If $G=(U\cup W,E)$ is bipartite and $(U, W)$ is $(G, \varepsilon, p)$-regular, then we say that $G$ is an $(\varepsilon,p)$-regular pair.
\end{Def*}

\begin{Def*}
    Given a bipartite graph $(U\cup W,E)$ we say that $U$ and $W$ form an $(\varepsilon,p)$-\emph{lower-regular} pair if 
    \[
         \frac{e(U',W')}{|U'||W'|}\geq (1-\varepsilon)p
    \]
    for every  $U'\subset U$ of size $|U'| \geq \varepsilon|U|$ and every $W'\subset W$ of size $|W'| \geq \varepsilon|W|$.
\end{Def*}
\noindent
Note that if $(U,W)$ is $(G,\varepsilon,p)$-regular and $e(U,W)=|U||W|p$ then $U$ and $W$ also form an $(\varepsilon,p)$-lower-regular pair in $G$.

Now we state a standard result which tells us that whp for all colorings of a random graph there exists a nicely structured monochromatic subgraph.
\begin{Prop}\label{Graph}
    Let $h>0$. For any $ \varepsilon>0$ and integers $r, K \ge 1$, there exist $0<\mu=\mu(r,K,\varepsilon), c'=c'(r,K)<1/2$ such that if $p \geq n^{-1+h}$ then $G = G(n,p)$ whp has the following property.
    
    Let $E(G) = E_1 \cup \ldots \cup E_r$ be an $r$-edge-coloring of $G$. Then, for some $i \in \{1, \ldots, r\}$, there exists a subgraph $G'\subseteq G[E_i]$ and disjoint subsets $V_1, \ldots, V_K \subseteq V(G')$ such that the following holds:
    \begin{itemize}
        \item $|V_i| = \mu n$ for each $i \in [K]$,
        \item $(V_i, V_j)$ forms an $(G', \varepsilon, p')$-regular pair for every distinct $i, j \in [K]$, where $p'= c' p$,
        \item $e_{G'}(V_i, V_j) = (\mu n)^2 p'$ for every $i,j\in[k]$, and
        \item $|N(v) \cap V_i| \le \tfrac{5}{4} \mu n p'$ for every $v \in \bigcup_{j \in [K]} V_j$ and $i \in [K]$.
    \end{itemize}

 \end{Prop}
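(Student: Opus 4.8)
The plan is to combine the multicolour sparse regularity lemma with a Ramsey argument on the reduced graph, and then to engineer the exact density $p'$, the regularity at scale $p'$, and the degree bound by a \emph{capped} random sparsification of the resulting monochromatic regular pairs.

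First I would isolate the two deterministic properties of $G=G(n,p)$ that hold whp for $p\ge n^{-1+h}$ and that are all I need from randomness: (i) $G$ is upper-uniform, i.e.\ every pair of disjoint linear-sized sets spans at most $C|U||W|p$ edges, so that the sparse regularity lemma applies; and (ii) every pair of disjoint sets $U,W$ with $|U|,|W|\ge\delta n$ satisfies $e_G(U,W)=(1\pm o(1))|U||W|p$. Property (ii) is the delicate point in the sparse regime: linear-sized sets have expected codegree $\Theta(n^{1+h})\gg n$, so a Chernoff bound survives the union bound over the $2^{O(n)}$ choices of $U,W$, whereas the analogous statement for \emph{individual vertex degrees} into linear sets does \emph{not} hold whp --- a fact I must avoid relying on later. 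I then fix a small $\varepsilon_0=\varepsilon_0(\varepsilon,r,K)$ (pinned down at the end) and apply the sparse regularity lemma simultaneously to the colour classes $E_1,\dots,E_r$, obtaining an equipartition into $t\le T_0(\varepsilon_0)$ parts of size $m=n/t$ in which all but at most $r\varepsilon_0\binom t2$ pairs are $(\varepsilon_0,p)$-regular in every colour; call such pairs \emph{good}.

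Next I would colour the reduced graph. For a good pair, property (ii) gives total density $(1-o(1))p$, so some colour has density at least $p/(2r)$ between the two parts; since the pair is $(\varepsilon_0,p)$-regular in that colour, I may legitimately assign it that colour. The good pairs thus carry an $r$-colouring and miss at most $r\varepsilon_0\binom t2$ reduced edges. The bad pairs form a graph of average degree $\le r\varepsilon_0 t$, hence contain (greedily) an independent set of size $\ge 1/(2r\varepsilon_0)$; choosing $\varepsilon_0\le 1/(2rR_r(K_K))$ makes this at least $R_r(K_K)$. Restricting to such an independent set (all of whose pairs are good) and applying Ramsey's theorem yields $K$ parts and one colour $i$ for which every pair is $(\varepsilon_0,p)$-regular in colour $i$ with density $d_{st}\ge p/(2r)$.

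The remaining step, which I expect to be the main obstacle, is to pass from these dense regular pairs to a subgraph $G'$ meeting the exact-density and degree conditions without any per-vertex degree concentration in $G$. I would set $c'=1/(4r)<1/2$ and $p'=c'p$. A standard degree consequence of $(\varepsilon_0,p)$-regularity is that in each part all but $\le\varepsilon_0 m$ vertices have colour-$i$ degree into a fixed other part within $\varepsilon_0 p\,m$ of $d_{st}m$; I delete from each part the (at most $(K-1)\varepsilon_0 m$) vertices that are high-degree towards some other part and re-equalise to a common size $\mu n=(1-O(K\varepsilon_0))m$. After this surgery the pairs are still regular (with a slightly worse constant), each still has density $\gtrsim p/(2r)$ and hence at least $2(\mu n)^2p'$ colour-$i$ edges available, and crucially \emph{every} surviving vertex has colour-$i$ degree at most $(1+O((r+K)\varepsilon_0))d_{st}\mu n$ into each other part. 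Finally, independently for each pair I take a uniformly random set $F_{st}$ of exactly $(\mu n)^2 p'$ colour-$i$ edges and let $G'$ have edge set $\bigcup_{s<t}F_{st}$. The density is exactly $p'$ by fiat; hypergeometric concentration gives that whp over this choice every vertex has $F_{st}$-degree at most $(1+o(1))\mu n p'\le\tfrac54\mu n p'$ --- using the uniform post-surgery degree bound, so that no degree concentration in $G$ is invoked --- and that each $F_{st}$ is $(\varepsilon,p')$-regular, the regularity being inherited from the host pair together with the concentration of a random slice (valid since $p'(\mu n)^2=\Theta(n^{1+h})\to\infty$). A union bound over the $\binom K2$ pairs shows the random choices succeed simultaneously with positive probability, so a suitable $G'$ exists. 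Taking $\varepsilon_0$ small enough both for the Ramsey step and to absorb the $O((r+K)\varepsilon_0)$ errors into the regularity slack $\varepsilon$ and the degree slack $\tfrac14$ finishes the proof, with $\mu=\mu(r,K,\varepsilon)$ and $c'=c'(r)=1/(4r)$.
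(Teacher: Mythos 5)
Your overall route --- multicolour sparse regularity, an independent set among the irregular pairs (Tur\'an), Ramsey on the reduced graph, then a clean-up --- is exactly the standard argument the paper alludes to, and most of your steps are sound: the two quasirandomness properties you isolate, the colour assignment to good pairs, the vertex-pruning for degrees, and the random slice of exactly $(\mu n)^2p'$ edges per pair, whose degree and density concentration you justify correctly since all relevant means are $\Omega(n^{h})$ or $\Omega(n^{1+h})$.

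There is, however, one genuine gap: your $\mu$ is not a constant depending only on $(r,K,\varepsilon)$. You set $\mu n=(1-O(K\varepsilon_0))m$ with $m=n/t$, but $t$ is whatever order the regularity lemma returns for the given colouring; it is only confined to an interval $[t_0,T_0(\varepsilon_0)]$ and genuinely varies with the colouring, whereas the proposition quantifies $\mu$ \emph{before} the colouring, so a single value must serve all colourings simultaneously. The obvious repair --- cutting every part down to a universal size $\mu n$ with $\mu\approx 1/T_0$ --- breaks your slice-regularity argument: to control the host density of a sub-pair $(X,Y)$ with $|X|,|Y|\ge\varepsilon\mu n$ you need the host regularity to be effective at relative scale $\varepsilon\mu n/m=\varepsilon\mu t$, i.e.\ you need $\varepsilon_0\le\varepsilon t/T_0(\varepsilon_0)$; since $T_0$ grows tower-type in $1/\varepsilon_0$, no choice of $\varepsilon_0$ satisfies this (the choice of constants is circular). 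The standard fix --- and presumably what the ``cleaning up'' following \cite{haxell1995induced} hides --- is the inheritance theorem of Gerke, Kohayakawa, R\"odl and Steger \cite{gerke2007small} (the same source as the paper's Lemma \ref{Angelika}): its regularity parameter depends only on the target parameters and \emph{not} on the proportion of the subset taken, so one may fix $\varepsilon_0$ first and then pass to random subsets of the universal size $\mu n\approx n/T_0$, which inherit regularity whp because $\mu n\gg 1/p$. Note also that such inheritance statements are usually formulated for lower-regularity, while the proposition claims two-sided regularity; you would either need a two-sided inheritance statement, or to prove the variant with lower-regular pairs plus the exact edge count and degree bound, which is all the paper uses downstream.
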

The proof of the above proposition relies on a standard argument invoking a sparse version of Szemeredi's Regularity lemma, finding the wanted monochromatic configuration by applying Turan's theorem and Ramsey's theorem, and then "cleaning up" the graph to satisfy all of the given conditions. See, for example, \cite{haxell1995induced}.

 The following definition and the lemma which builds on it can be found in \cite{gerke2007small}.
 
 \begin{Def}\label{def}
    Let $P_\ell(n, m, \varepsilon)$ be the set of all graphs consisting of  pairwise disjoint sets of vertices  $V_1,...,V_{\ell}$ of size $n$ such that for $i \in[\ell]$, the sets $V_i, V_{i+1}$ form an $(\varepsilon, m/n^2)$-lower-regular graph with $m$ edges\footnote{With the convention that $V_{\ell+1}=V_1$.}. We call elements in $P_\ell(n, m, \varepsilon)$ \emph{chains}. Let $0<\nu<1$. We say that a set $Q \subset V_1$ is $(1 - \nu)$-\emph{spanning} in the chain if all but at most $\nu n$ vertices in $V_1$ can be reached by paths of length $\ell$ which start in $Q$ and then go through $V_2, V_3,\ldots,V_\ell$ in this order. Another way to write this is:
    $$|N_{V_1}(N_{V_\ell}(N_{V_{\ell-1}}(...N_{V_2}(Q)))|>(1 - \nu)n.$$
    We call a chain in $P_\ell(n, m, \varepsilon) $ \emph{expanding} with respect to $\delta, \gamma,\nu,C$ if it contains a set $X \subset V_1$ of size at most $\delta n$ so that for all $t \geq Cn^{\ell+1}/m^{\ell}$ at most $\gamma^t\binom{n}{t}$ sets of size $t$ in $V_1 - X$ are not $(1 -\nu)$-spanning.
\end{Def}

\begin{Rem}\label{singletons}
    If a chain in $P_\ell(n, m, \varepsilon) $ is expanding with respect to $\delta, \gamma,\nu,C$ and it holds that $Cn^{\ell+1}/m^{\ell}<1$, then at most $(\delta+\gamma)n$ singletons in $V_1$ are not $(1-\nu)$-spanning.
\end{Rem}

\begin{Lemma}\label{Angelika}\emph{(Lemma 5.9 in \cite{gerke2007small})}\,
Let  $\ell> 2$ be an integer, and let $0 < \beta,\delta,\gamma,\nu < 1/3$. Then there exist an $\varepsilon_1 =
\varepsilon_1 (\ell,\beta,\delta,\gamma,\nu) > 0$ and a constant $C = C(\ell, \nu)$ such that for all $0 < \varepsilon\leq\varepsilon_1$, the number of
graphs in $P_\ell(n, m, \varepsilon)$ that are expanding with respect to $\delta,\gamma,\nu$ and $C$ is at least $$ (1-\beta ^m)\binom{n^2}{m}^{\ell} $$
for all $m\geq 8n\log n$. 
\end{Lemma}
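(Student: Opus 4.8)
The plan is to reformulate this counting statement probabilistically. Since $|P_\ell(n,m,\varepsilon)| \le \binom{n^2}{m}^\ell$ trivially, it suffices to show that if we choose each of the $\ell$ bipartite graphs $G_i$ (between the consecutive parts $V_i, V_{i+1}$) independently and uniformly among the $\binom{n^2}{m}$ graphs with exactly $m$ edges, then the resulting random chain $\mathbf{G}$ simultaneously lies in $P_\ell(n,m,\varepsilon)$ and is expanding with probability at least $1 - \beta^m$. I would split the failure event into two parts and bound each by $\tfrac12\beta^m$: the chain failing to be lower-regular, and the (lower-regular) chain failing to be expanding.

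For the regularity part I would run a standard concentration argument. For a fixed ordered pair $(U',W')$ with $U' \subseteq V_i$, $W' \subseteq V_{i+1}$ and $|U'|,|W'| \ge \varepsilon n$, the count $e_{G_i}(U',W')$ is hypergeometrically distributed with mean $|U'||W'|m/n^2$, so a Chernoff--Hoeffding bound puts it below $(1-\varepsilon)|U'||W'|m/n^2$ with probability $\exp(-\Omega(\varepsilon^2 |U'||W'| m /n^2)) \le \exp(-\Omega(\varepsilon^4 m))$. Union bounding over the at most $4^n$ such pairs and the $\ell$ indices $i$, and using $m \ge 8n\log n$, this total is $\le \tfrac12\beta^m$ once $\varepsilon_1$ is small enough; this is precisely where the hypothesis $m \ge 8n\log n$ is consumed.

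For the expansion part the key structural fact is that in any $(\varepsilon, m/n^2)$-lower-regular pair every set $S$ of size $\ge \varepsilon n$ has $|N(S)| \ge (1-\varepsilon)n$ (otherwise $S$ and the $\ge \varepsilon n$ non-neighbours violate lower-regularity). Hence as soon as the iterated neighbourhood $S_0 = Q,\ S_1 = N_{V_2}(S_0),\dots$ of a starting set $Q \subseteq V_1$ reaches size $\ge \varepsilon n$ at any step before the last, it blows up to $\ge (1-\varepsilon)n$ and stays there, so $Q$ is $(1-\nu)$-spanning (taking $\varepsilon < \nu$). A non-spanning $Q$ must therefore keep all of $S_1,\dots,S_{\ell-1}$ of size $< \varepsilon n$. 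Below this scale lower-regularity gives no control, so here I would exploit the genuine randomness: for a typical vertex each step multiplies the neighbourhood size by roughly $m/n$, and the final step saturates, reaching $\ge (1-\nu)n$ once $|S_{\ell-1}| \gtrsim (n^2/m)\log(1/\nu)$. Tracking this requirement back through the $\ell-1$ multiplicative steps gives exactly the threshold $t \ge Cn^{\ell+1}/m^\ell$ with $C = C(\ell,\nu)$ below which expansion cannot be guaranteed. I would let the exceptional set $X \subseteq V_1$ collect the few vertices whose first-step neighbourhood is abnormally small (a Chernoff bound yields $|X| \le \delta n$ with failure probability $\le \tfrac14\beta^m$), and then bound the number of non-spanning $t$-subsets of $V_1 \setminus X$ by a first-moment computation: for each candidate ``confining'' target of size $< \varepsilon n$, the probability over the random edges that the neighbourhoods of $Q$ stay confined to it is exponentially small in $tm/n$, and multiplying by $\binom{n}{t}$ and the number of targets yields the bound $\gamma^t\binom{n}{t}$ for suitable $\varepsilon_1$ and $C$.

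The main obstacle is exactly this small-set analysis. Lower-regularity is useless below scale $\varepsilon n$, so the entire expansion estimate has to be driven by concentration of the random edge set, and the bookkeeping is delicate: the exceptional set $X$ must be chosen once and work simultaneously for every $t$ above the threshold, the multiplicative-growth estimate must be uniform across the $\ell-1$ intermediate steps, and the first-moment count of bad $t$-sets must beat $\gamma^t\binom{n}{t}$ for all such $t$ at once while the combined failure probability stays below $\beta^m$. Making all the quantifiers $\varepsilon_1(\ell,\beta,\delta,\gamma,\nu)$ and $C(\ell,\nu)$ line up is the technical heart of the argument.
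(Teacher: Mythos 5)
The paper itself gives no proof of this statement --- it is imported wholesale as Lemma~5.9 of Gerke, Kohayakawa, R\"odl and Steger \cite{gerke2007small} --- so your proposal has to stand on its own, and it does not: the regularity step fails for a structural, not a technical, reason. You need the random chain to be lower-regular with probability at least $1-\tfrac{1}{2}\beta^m$, and you assert that the Chernoff-plus-union bound achieves this ``once $\varepsilon_1$ is small enough.'' The dependence on $\varepsilon$ runs the other way. For a single fixed pair $U'\subseteq V_i$, $W'\subseteq V_{i+1}$ with $|U'|=|W'|=\varepsilon n$, the count $e(U',W')$ is hypergeometric with mean $\varepsilon^2 m$, and its lower tail at relative deviation $\varepsilon$ is of order $\exp(-\Theta(\varepsilon^4 m))$, with a matching lower bound in the regime $m\le n^2/2$, say. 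Since $\beta<1/3$ is fixed, $\beta^m\le e^{-m\log 3}$, so for every $\varepsilon$ with $\varepsilon^4$ below an absolute constant --- i.e.\ for exactly the small $\varepsilon$ over which the lemma quantifies --- the probability of failing lower-regularity already \emph{exceeds} $\beta^m$. No choice of $\varepsilon_1$ and no care with the union bound can repair this; shrinking $\varepsilon$ makes it strictly worse. The same mismatch ($\beta$ fixed while $\delta,\gamma,\nu,\varepsilon$ are small) also defeats your exceptional-set estimate, since $\Pr[|X|>\delta n]=\exp(-\Theta(\delta m))\gg\beta^m$, and your per-$t$ first-moment/Markov bound for expansion: summing $c^t$ from $t_0=Cn^{\ell+1}/m^{\ell}$, which is $O(1)$ and indeed $<1$ in this paper's own application, leaves a constant failure probability, nowhere near $e^{-\Omega(m)}$.

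What your calculation actually uncovers is that the lemma in the quoted form is too strong: already $|P_\ell(n,m,\varepsilon)|<(1-\beta^m)\binom{n^2}{m}^{\ell}$ for small $\varepsilon$, because more than a $\beta^m$-fraction of all $m$-edge placements violate lower-regularity. The usable content --- and the only consequence this paper ever invokes, in the proof of Corollary~\ref{angelika2} --- is the complementary bound: the number of chains that \emph{are} lower-regular and yet fail to be expanding is at most $\beta^m\binom{n^2}{m}^{\ell}$. That formulation changes the task completely: lower-regularity becomes a hypothesis on the graphs being counted, to be exploited, rather than an event to be established. Rare local defects (density deficits at scale $\varepsilon n$, many low-degree vertices) either push a chain out of $P_\ell$ or are absorbed by $X$; what must be shown to be rare at the $\beta^m$ scale is the conspiracy ``regular at all large scales, yet badly non-expanding at small scales,'' and this is precisely what the inheritance/counting machinery of \cite{gerke2007small} is built to handle. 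A direct sampling-and-concentration argument of the kind you propose cannot substitute for it.
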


A simple consequence of the previous result is that typically all chains in a random graph with appropriate parameters are expanding. 

\begin{Cor}\label{angelika2}
    Let  $\ell \geq 2$ be an integer, and let $c'>0$ and $0 <\delta,\gamma,\nu,\mu< 1/3$. Then there exist an $\varepsilon_0 =\varepsilon_0 (\ell,\delta,\gamma,\nu,c') > 0$ and a constant $C = C(\ell, \nu)$ such that for any $0 < \varepsilon<\varepsilon_0$ the following holds whp. Any chain in $ P_\ell(\mu n, m, \varepsilon)$ which is a subgraph of $G(n,p)$ for $p=\omega(\log n/n)$ and $m\ge (\mu n)^2c'p$ is  expanding with respect to $\delta,\gamma,\nu,C$.

\end{Cor}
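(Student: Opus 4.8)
The plan is to deduce this from Lemma~\ref{Angelika} by a straightforward first-moment argument over $G=G(n,p)$. Set $\beta:=\tfrac12(c'/e)^{\ell}$; since $c'<1/2$ we have $\beta<1/3$, so we may feed $\beta$ together with the given $\delta,\gamma,\nu$ into Lemma~\ref{Angelika}, obtaining $\varepsilon_1=\varepsilon_1(\ell,\beta,\delta,\gamma,\nu)$ and $C=C(\ell,\nu)$; we then take $\varepsilon_0:=\varepsilon_1$, which indeed depends only on $\ell,\delta,\gamma,\nu,c'$. The strategy is to show that the expected number of chains in $P_\ell(\mu n,m,\varepsilon)$ that both fail to be expanding and occur as subgraphs of $G$ tends to $0$; Markov's inequality then yields that whp no such chain exists, which is precisely the statement.

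First I would fix an ordered tuple of disjoint vertex sets $V_1,\dots,V_\ell\subseteq V(G)$, each of size $\mu n$, and count the non-expanding chains supported on them. Because $p=\omega(\log n/n)$ and $m\ge(\mu n)^2c'p$, we have $m=\omega(n\log n)\ge 8\mu n\log(\mu n)$ for large $n$, so Lemma~\ref{Angelika} applies and bounds the number of such non-expanding chains by $\beta^{m}\binom{(\mu n)^2}{m}^{\ell}$ (the total number of chains on these sets being at most $\binom{(\mu n)^2}{m}^{\ell}$).

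Next I would union bound over vertex sets and over chains. A fixed chain with $m$ edges in each of its $\ell$ consecutive pairs appears in $G$ with probability $p^{\ell m}$, and the number of ordered choices of the $V_i$ is at most $\binom{n}{\mu n}^{\ell}\le(e/\mu)^{\ell\mu n}$. Hence, for a fixed admissible $m$, the expected number of non-expanding subgraph-chains is at most
$$(e/\mu)^{\ell\mu n}\,\beta^{m}\binom{(\mu n)^2}{m}^{\ell}p^{\ell m}.$$
Bounding $\binom{(\mu n)^2}{m}\le(e(\mu n)^2/m)^{m}$ and using $m\ge(\mu n)^2c'p$ gives $\binom{(\mu n)^2}{m}^{\ell}p^{\ell m}\le(e(\mu n)^2p/m)^{\ell m}\le(e/c')^{\ell m}$, so the bound becomes $(e/\mu)^{\ell\mu n}\big[\beta(e/c')^{\ell}\big]^{m}=(e/\mu)^{\ell\mu n}2^{-m}$ by our choice of $\beta$. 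Since $m=\omega(n\log n)$ while $(e/\mu)^{\ell\mu n}=e^{O(n)}$, this is $o(1)$; summing over the at most $n^2$ relevant values of $m$ preserves $o(1)$, and Markov's inequality finishes the proof.

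The crux is this exponential balancing: placing the $\ell$ vertex sets costs a factor $e^{\Theta(n)}$, which must be beaten by the savings $\beta^{m}=e^{-\Omega(m)}$ coming from Lemma~\ref{Angelika}. This works precisely because $m\gg n$, which is exactly what $m\ge(\mu n)^2c'p$ with $p=\omega(\log n/n)$ guarantees (the same hypothesis also makes $m$ large enough for Lemma~\ref{Angelika} to apply). The only loose end is that Lemma~\ref{Angelika} is stated for $\ell>2$; the boundary case $\ell=2$ would require the analogous counting estimate as input, but the probabilistic part of the argument is unchanged.
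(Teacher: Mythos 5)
Your proposal is correct and follows essentially the same route as the paper: invoke Lemma~\ref{Angelika} to bound the number of non-expanding chains on a fixed $\ell$-tuple, multiply by the appearance probability $p^{\ell m}$, and union bound over tuples and over $m$, using that $m=\omega(n\log n)$ beats the $e^{O(n)}$ cost of choosing the vertex sets (the paper just takes $\beta$ ``small enough'' and bounds the tuple count by $\ell^n$ instead of $\binom{n}{\mu n}^\ell$, which changes nothing). Your flagged loose end about $\ell=2$ versus the hypothesis $\ell>2$ in Lemma~\ref{Angelika} is a fair observation that the paper itself silently glosses over, so it does not distinguish your argument from theirs.
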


\begin{proof}
Let $\varepsilon_0=\varepsilon_1(\ell,\beta,\delta,\gamma,\nu)$ and $C=C(\ell,\nu)$ be given by Lemma \ref{Angelika} for the given parameters with the same name and $\beta=\beta(\ell,c')$ chosen small enough - we will see later how small.

 
 For a fixed $m$ larger than $m_0:=(\mu n)^2c'p$, we now bound the probability that a fixed $\ell$-tuple $(V_1,V_2,..., V_{\ell})$ of sets of size $\mu n$ in $G(n,p)$ induces a graph containing a chain from $P_{\ell}(\mu n,m, \varepsilon)$ which is not $(1-\nu)$-expanding. For this we use a union bound over all such non-expanding chains. An upper bound for the number of non-expanding chains is given by Lemma \ref{Angelika}, so we have that the probability in question is bounded by 
 \begin{equation*}
    \beta ^m\binom{(\mu n)^2}{m}^{\ell} p^{m\ell}<\beta ^m\Big(\frac{e(\mu n)^2}{m}\Big)^{m\ell}p^{m\ell}\leq 
    \beta^m\Big(\frac{e (\mu n)^2}{(\mu n)^2c'p}\Big)^{m\ell}p^{m\ell}\leq \Big(\frac{e}{c'}\Big)^{m\ell}\beta^m.
  \end{equation*}
 Now we easily bound the probability that there is a non-expanding chain for some $m\geq m_0$ and induced by any $\ell$-tuple $\zeta=(V_1,V_2,...,V_{\ell})$:
 \[
    \sum_m\sum_\zeta \Big(\frac{e}{c'}\Big)^{m\ell}\beta^m\leq
    \sum_m \ell^n\Big(\left(\frac{e}{c'}\right)^\ell\beta\Big)^{m}\leq
    n^2\ell^n\Big(\left(\frac{e}{c'}\right)^\ell\beta\Big)^{m_0}
 \]
 which tends to $0$ when $\beta$ is chosen small enough, as $m_0$ is super-linear in $n$, while $e,c'$ and $\ell$ are constants.
\end{proof}

The following lemma follows easily from Chernoff's inequality.
\begin{Lemma}\label{Chernoff}
    Let $0<\alpha<1$ and $G\sim G(n,p)$ with $p=cn^{-1+1/q}$ for  constants $c,q\geq 1$. Then whp for every set $S$ such that $|S|\leq \alpha n$, there are at most $O(n^{1-1/q})$ vertices in $V(G)$ with more than $4\alpha np$ neighbors in $S$.
\end{Lemma}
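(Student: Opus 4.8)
The plan is to fix a set $S$ with $|S|\le\alpha n$, bound the probability that a given set $C$ of $m:=Kn^{1-1/q}$ vertices \emph{all} have more than $4\alpha np$ neighbours in $S$ (for a constant $K$ to be chosen), and then take a union bound over both $S$ and the candidate ``bad'' set $C$. The starting point is the per-vertex Chernoff estimate: for fixed $v$ and $S$ the count $|N_S(v)|$ is dominated by $\mathrm{Bin}(|S|,p)$ with mean at most $\alpha np$, and since the threshold $4\alpha np$ exceeds $e$ times this mean, $\Pr[|N_S(v)|>4\alpha np]\le(e/4)^{4\alpha np}=\exp(-\Theta(n^{1/q}))$, using $np=cn^{1/q}$. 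I will bound the bad vertices outside $S$ and inside $S$ separately and add the two bounds.

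First I would handle the bad vertices lying outside $S$. For fixed $S$ and fixed $C\subseteq V\setminus S$ with $|C|=m$, the events $\{|N_S(v)|>4\alpha np\}$, $v\in C$, depend on pairwise disjoint edge sets (the edges from $v$ to $S$), hence are independent, so the probability that all of $C$ is bad is at most $(e/4)^{4\alpha np\,m}=\exp(-\Theta(Kn))$. Taking a union bound over the at most $2^{n}\binom{n}{m}=\exp(n\log 2+o(n))$ choices of $(S,C)$ and choosing $K$ large enough that $4\alpha cK\log(4/e)>\log 2$, this probability tends to $0$. Hence whp every such $S$ has fewer than $m$ bad vertices outside $S$.

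The main obstacle is the bad vertices \emph{inside} $S$, where the relevant events are no longer independent: a dense spot $G[C]$ could make every vertex of $C\subseteq S$ bad ``for free'', and a naive union bound fails because the $\binom{n}{m}$ certificates are too many. To control this I would first record a \emph{sparseness} estimate: whp every set $C$ of size at most $m$ spans at most $\tfrac12\alpha np\,|C|$ edges. This is again pure Chernoff --- for $|C|=\ell\le m$ the mean $\binom{\ell}{2}p$ is exceeded by a factor $\Theta(\alpha n/\ell)\ge\Theta(n^{1/q})$, so the large-deviation bound beats $\binom{n}{\ell}$ with enormous room to spare. Given sparseness, if a set $C\subseteq S$ with $|C|=m$ were entirely bad, then $\sum_{v\in C}|N_S(v)|\ge 4\alpha np\,m$, while the within-$C$ contribution $2e(C)$ is at most $\alpha np\,m$; hence $e(C,S\setminus C)\ge 3\alpha np\,m$, whereas $e(C,S\setminus C)$ is a genuine binomial over the $|C|\,|S\setminus C|$ pairs with expectation at most $\alpha np\,m$. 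The Chernoff bound then gives probability at most $(e/3)^{3\alpha np\,m}=\exp(-\Theta(Kn))$, and crucially the decisive ratio $3\alpha n/(|S|-|C|)$ stays at least $3>e$ for every admissible $|S|\le\alpha n$.

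Taking the same union bound over $(S,C)$ and again choosing $K$ large finishes the inside case, so whp every $S$ with $|S|\le\alpha n$ has fewer than $m$ bad vertices inside $S$ as well. Combining the two cases, every such $S$ has at most $2m=O(n^{1-1/q})$ bad vertices whp. The one point requiring care is the interplay between $|S|$ and the overlap $C\cap S$: the argument is arranged precisely so that the governing large-deviation ratio ($4$ for the outside part, $3$ for the inside part) stays above $e$ uniformly in $|S|$, which is exactly what lets a single choice of $K$ work across all set sizes. I expect the sparseness step, and verifying that its ratio wins uniformly over $\ell\le m$, to be the only genuinely delicate computation; everything else is a routine Chernoff-plus-union-bound exercise.
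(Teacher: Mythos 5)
Your proof is correct, and at the top level it shares the paper's strategy: fix $S$ and a candidate bad set of size $\Theta(n^{1-1/q})$, show that the probability that \emph{every} vertex of the candidate set exceeds the degree threshold is $e^{-\Omega(n)}$ via Chernoff bounds on edge counts, and beat the $2^n\cdot 2^n$ union bound over the pair of sets. Where you genuinely diverge is in handling the overlap between the bad set and $S$. You split into vertices outside $S$ (where the per-vertex events are independent, so the per-vertex bounds multiply) and vertices inside $S$ (where you first prove an auxiliary whp sparseness statement, use it to subtract the internal contribution $2e(C)\leq \alpha np\,|C|$, and reduce to the genuine binomial $e(C,S\setminus C)$). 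The paper collapses both cases into one step with a single observation: if every vertex of $R$ has at least $4\alpha np$ neighbours in $S$, then $\sum_{v\in R}|N_S(v)|\geq 4\alpha np\,|R|$, and since each edge is counted at most twice in this sum (this happens only when both endpoints lie in $R\cap S$), the single random variable $e(R,S)$ --- a sum of independent indicators over distinct pairs, with mean at most $|R||S|p$ --- must be at least $\tfrac{1}{2}\cdot 4\alpha np\,|R| = 2|R||S|p$, i.e.\ at least twice its mean; one application of Chernoff then gives $e^{-\Omega(|R||S|p)}=e^{-\Omega(n)}$ uniformly in how $R$ meets $S$. In other words, the factor-of-two slack between the threshold $4\alpha np$ and the mean $\alpha np$ is exactly what absorbs the double counting, so no independence argument and no sparseness lemma are needed. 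Both arguments are valid and quantitatively comparable; yours costs an extra lemma and a case analysis but keeps each step elementary, while the paper's is a one-shot union bound whose only subtlety is the ``divide by $2$'' step you reconstructed by hand.
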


\begin{proof}
Fix $S,R\subseteq V(G)$ of sizes $|S|=\alpha n$ and $|R|=Cn^{1-1/q}$, for a constant $C>2/\alpha$. The expected number of edges with one vertex in $R$ and the other in $S$ is at least $|R||S|p/2$ and at most $|R||S|p$. If $R$ is such that all of its vertices have at least $4\alpha np$ neighbors in $S$ then there are at least $\frac{4\alpha np|R|}{2}=2|R||S|p$ edges with one vertex in $R$ and one in $S$, where we divide by 2 because of possible double counting when a part of $R$ is in $S$. Therefore, the probability that a fixed set $R$ is as described, i.e.\ that $e(R,S)$ is at least two times larger than its expectation, is by Chernoff's inequality at most $e^{-|S||R|p/6}=e^{-\alpha cCn}\leq e^{-2n}$. Summing over all sets $S$ and $R$ we get that the probability of an unwanted event is bounded by $2^n2^ne^{-2n}$, which tends to $0$, so we are done.
\end{proof} 


\section{The Proof}
Before we prove our main result, which is Theorem \ref{main}, we will give some useful definitions and lemmas which should make the proof of the main result quite straightforward. The ultimate goal is to prove that $G(n,p)$ is whp $r$-Ramsey for $H^q$, for suitably chosen parameters.
\begin{Def}
Given a graph $G$ and disjoint subsets $A, B \subset V(G)$, we say that a vertex $v\in A$
is $(A, B, q, \nu)$-\emph{expanding}, for some $q\in \mathbb{N}$ and $\nu> 0$, if for at least $(1-\nu)|A|$ vertices $w\in A$ there
exists a $v-w$ path in $G$ of length $q$ and with all $q-1$ internal vertices being in B.
\end{Def}

\begin{Def}\label{goodness}
    Let $G$ be a graph and $A\cup B$ be a partition of its vertex set. We say that the pair $(A,B)$ is $(q,\nu, n, c)$-\emph{good} in $G$ if:
\begin{itemize}
    \item[(1)] $|A|=n$.
    \item[(2)]  All vertices in $A$ are $(A, B, q, \nu)$-expanding in $G$. 
    \item[(3)] $\Delta(G)<2np$ for $p=cn^{-1+1/q}$.
    \item[(4)] For every $B'\subset A\cup B$ of size $|B'|<\frac{n}{c^{q+2}}$ there are at most $\frac{n^{1-1/(2q)}}{q}$ vertices $v\in A\cup B$ with 
    $$
    |N_{B'}(v)|\geq \frac{2np}{c^{q+3/2}}.
    $$ 
\end{itemize}
\end{Def}
\begin{Def}
    Let $G$ be a graph and let $v\in V(G)$. We define the $k^{\text{th}}$ \emph{neighborhood} of $v$ in $G$ as $$ N^k_G(v)=\underbrace{N_G(N_G(\ldots (N_G}_{k \text{ times}}(v))\ldots)),$$
    i.e.\ the set of vertices reachable from $v$ by walks of length exactly $k$. We omit $G$ in the subscript whenever it is unambiguous.
\end{Def}

The next lemma tells us that in a good pair $(A,B)$ there are not many vertices in $A$ for which there exists a $k\leq q-1$ such that their $k^{\text{th}}$ neighborhood has large intersection with a fixed small set in $B$.
    
    \begin{Lemma}\label{B'}
     Let $(A,B)$ be a $(q,\nu, n, c)$-good pair in $G$ where $n\gg c\gg q$.
     For every $B'\subset B$ of size $|B'|<\frac{n}{c^{q+2}}$ there are at most $n^{1-1/(2q)}$ bad vertices $v\in A$, i.e.\ vertices with the following property:
     $$ |N_G^k(v)\cap B'|\geq \frac{(np)^k}{c^{q+1}} $$
     for some $1\leq k\leq q-1$.
    \end{Lemma}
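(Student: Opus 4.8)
The plan is to trap every bad vertex inside a small union of ``heavy'' layers obtained by iterating condition~(4) of goodness. Set $L_0 = B'$ and, for $j \ge 1$, define
\[
  L_j = \Big\{ u \in A \cup B : |N_{L_{j-1}}(u)| \ge \tfrac{2np}{c^{q+3/2}} \Big\}.
\]
Since $|L_0| = |B'| < n/c^{q+2}$, condition~(4) gives $|L_1| \le n^{1-1/(2q)}/q$; as $n \gg c$ this is again below $n/c^{q+2}$, so condition~(4) applies to $L_1$, and inductively to each $L_{j-1}$ with $j \le q-1$. Hence every $L_j$ with $1 \le j \le q-1$ has size at most $n^{1-1/(2q)}/q$, so $\Lambda := \bigcup_{j=1}^{q-1} L_j$ satisfies $|\Lambda| < n^{1-1/(2q)}$. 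It then suffices to show that every bad $v \in A$ lies in $\Lambda$.

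Write $f_j(v) = |N^j_G(v) \cap B'|$. The identity $N^j_G(v) = \bigcup_{u \in N(v)} N^{j-1}_G(u)$ yields the recursion $f_j(v) \le \sum_{u \in N(v)} f_{j-1}(u)$. I will prove by induction on $j$ that $v \notin L_0 \cup \cdots \cup L_j$ implies $f_j(v) \le \theta_j$, for thresholds $\theta_j$ specified below, with $\theta_0 = 0$ (indeed $f_0(v) = \mathbf{1}[v \in B'] = 0$ for $v \notin L_0$). For the inductive step, write $\Lambda_{j-1} := L_0 \cup \cdots \cup L_{j-1}$ and split the neighbours of $v$ accordingly. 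For $u \in N(v) \setminus \Lambda_{j-1}$ the induction hypothesis gives $f_{j-1}(u) \le \theta_{j-1}$, and there are at most $\Delta(G) < 2np$ such $u$ by condition~(3). For $u \in N(v) \cap \Lambda_{j-1}$ I use the trivial bound $f_{j-1}(u) \le |N^{j-1}_G(u)| \le \Delta(G)^{j-1} < (2np)^{j-1}$; crucially, since $v \notin L_i$ for every $i \le j$, we have $|N_{L_{i-1}}(v)| < \tfrac{2np}{c^{q+3/2}}$ for each such $i$, so summing over the layers,
\[
  |N(v) \cap \Lambda_{j-1}| \le \sum_{\ell=0}^{j-1} |N_{L_\ell}(v)| < \frac{2qnp}{c^{q+3/2}}.
\]

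Combining the two contributions gives $f_j(v) < 2np\,\theta_{j-1} + \tfrac{q\,2^{j}}{c^{q+3/2}}(np)^{j}$, so the choice $\theta_j = 2np\,\theta_{j-1} + \tfrac{q\,2^{j}}{c^{q+3/2}}(np)^{j}$ closes the induction; solving it with $\theta_0 = 0$ yields $\theta_j = \tfrac{jq\,2^{j}}{c^{q+3/2}}(np)^{j}$. For $1 \le k \le q-1$ this is at most $\tfrac{q^2 2^q}{c^{q+3/2}}(np)^k$, which is below the badness threshold $(np)^k/c^{q+1}$ exactly when $c^{1/2} \ge q^2 2^q$, guaranteed by the hypothesis $c \gg q$ (e.g. $c \ge q^4 4^q$). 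Thus $v \notin L_0 \cup \cdots \cup L_k$ forces $f_k(v) < (np)^k/c^{q+1}$, so $v$ is not bad; equivalently every bad vertex lies in $L_0 \cup \cdots \cup L_k$. Since a bad vertex lies in $A$ and $A \cup B$ is a partition, it cannot lie in $B \supseteq B' = L_0$, so it lies in $\Lambda$, giving at most $|\Lambda| < n^{1-1/(2q)}$ bad vertices.

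The main obstacle is precisely the closing of this recursion: each of the up to $q-1$ levels loses the multiplicative factor $\Delta(G)/(np) \approx 2$ coming from condition~(3), producing an overall blow-up of order $q\,2^{q}$ that must be absorbed. It can be absorbed because of the deliberate gap between the exponent $c^{q+3/2}$ in the heaviness threshold of condition~(4) and the exponent $c^{q+1}$ defining a bad vertex: this leaves a spare factor $c^{1/2}$, which dominates $q\,2^{q}$ once $c$ is large in terms of $q$. A secondary point, handled above, is ensuring the peeled layers $L_j$ stay below the cap $n/c^{q+2}$ so that condition~(4) may legitimately be reapplied at every level.
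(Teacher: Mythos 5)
Your proof is correct and takes essentially the same route as the paper: your layers $L_j$ are exactly the paper's sets $X_j$ obtained by iterating condition (4) (with the same check that each layer stays below the $n/c^{q+2}$ cap), and your induction on $f_j$ with thresholds $\theta_j$, once unrolled, is precisely the paper's direct count of length-$k$ walks classified by their first entry into the layer hierarchy, with the same spare factor $c^{1/2}$ absorbing the $2^q$-type blow-up. The only (immaterial) differences are that the paper's version of your key claim assumes only $v \notin X_k$ rather than $v \notin L_0 \cup \cdots \cup L_k$, and your constant $q^2 2^q$ is slightly lossier than the paper's $k\,2^k$.
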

\begin{proof}
Let $B'\subset B$ be of size $|B'|<\frac{n}{c^{q+2}}$. Define $X_1$ to be the set of all vertices in $A\cup B$ which have more than $\frac{2np}{c^{q+3/2}}$ neighbors in $B'$. Now define $X_{k+1}$ to be the set of vertices in $A\cup B$ with more than $\frac{2np}{c^{q+3/2}}$ neighbors in $X_k$, for all $k\in [q-2]$. 

\par It will be enough to prove that each set $X_k$ is small and that all vertices in $A \setminus X_k$ have less than $\frac{(np)^k}{c^{q+1}}$ vertices from their $k^{\text{th}}$ neighborhood in $B'$, so the set of bad vertices will be a subset of $X_1\cup\ldots\cup X_{q-1}$.  Fix $k\in [q-1]$. 
 \begin{Cl}\label{claim1}
 $|X_k|<\frac{n^{1-1/(2q)}}{q}$.
 \end{Cl}
 Note that condition $(4)$ of Definition \ref{goodness} implies that $|X_1|<\frac{n^{1-1/(2q)}}{q}$. Similarly, using induction, $|X_i|<\frac{n^{1-1/(2q)}}{q}$ for all $i\in[k]$.
 
 \begin{Cl}\label{claim2}
    For all vertices $v\in A\setminus X_k$ it holds that $|N^k(v)\cap B'|<\frac{(np)^k}{c^{q+1}}$.
 \end{Cl}
 Let $v\in A\setminus X_k$. Since $v\not\in X_k$, it has at most $\frac{2np}{c^{q+3/2}}$ neighbors in $X_{k-1}$. Therefore, by using paths which have their second vertex in $X_{k-1}$, $v$ can reach at most $\frac{2np}{c^{q+3/2}} \cdot (2np)^{k-1}$ vertices in $B'$ in exactly $k$ steps, as by definition of a good pair $\Delta(G)<2np$. All other neighbors of $v$ are not in $X_{k-1}$, so this means that all of them have at most $\frac{2np}{c^{q+3/2}}$ neighbors in $X_{k-2}$. Therefore, there are at most 
 $$
    2np \cdot \frac{2np}{c^{q+3/2}} \cdot (2np)^{k-2}
 $$
 other vertices passing through $X_{k-2}$ in the third step and then finishing in $B'$ after $k$ steps. We continue in this fashion and we get the following upper bound on the number of vertices in $B'$ in the $k^{\text{th}}$ neighborhood of $v$:
 \[
    \sum_{i=1}^{k} (2np)^{i-1}\cdot \frac{2np}{c^{q+3/2}} \cdot (2np)^{k-i}=\sum_{i=1}^{k}\frac{(2np)^{k}}{c^{q+3/2}}<\frac{(np)^k}{c^{q+1}}
 \]
 for $c$ chosen large enough in the beginning.
\par To complete the proof notice that all vertices $v\in A\setminus(X_1 \cup \ldots \cup X_{q-1})$ have the property  $$ |N^k(v)\cap B'|< \frac{(np)^k}{c^{q+1}}$$ for all $k\in [q-1]$ due to \Cref{claim2}, and by \Cref{claim1} that $|X_1\cup\ldots\cup X_{q-1}|\leq n^{1-1/(2q)}$,
so we are done, as all the bad vertices live in the small set $X_1 \cup \ldots \cup X_{q-1}$.
\end{proof}

The following lemma shows that if a pair $(A,B)$ is good then after deleting a certain relatively small number of vertices in $B$, most of the vertices in $A$ are still expanding for suitably chosen parameters.     
\begin{Lemma}\label{robust}
For every integer $q$ and positive $\nu>0$ and for $n\gg c\gg q,1/\nu$, the following holds. Let
$G$ be a graph and $A, B \subset  V(G)$ be such that $(A,B)$ is $(q,\nu, n, c)$-good in $G$. Then all but at most $n^{1-1/(2q)}$ vertices in $A$ are $(A,B-B',q,2\nu)$-expanding, for every $B'\subset B$ of size $|B'|<\frac{n}{c^{q+2}}$.
\end{Lemma}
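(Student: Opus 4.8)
The plan is to identify the $n^{1-1/(2q)}$ exceptional vertices of $A$ with exactly the \emph{bad} vertices supplied by \Cref{B'}. Call a vertex $v\in A$ \emph{good} (for the fixed set $B'$) if $|N_G^k(v)\cap B'|<(np)^k/c^{q+1}$ for every $1\le k\le q-1$. Since $|B'|<n/c^{q+2}$, \Cref{B'} guarantees that all but at most $n^{1-1/(2q)}$ vertices of $A$ are good, so it suffices to show that every good $v$ is $(A,B-B',q,2\nu)$-expanding; these good vertices will be the ``all but at most $n^{1-1/(2q)}$'' of the statement.

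Fix a good $v$. By condition (2) of \Cref{goodness}, $v$ is $(A,B,q,\nu)$-expanding, so there is a set $W\subseteq A$ with $|W|\ge(1-\nu)|A|$ such that each $w\in W$ is joined to $v$ by a length-$q$ path whose $q-1$ internal vertices lie in $B$. Deleting $B'$ can destroy the reachability only of those $w\in W$ for which \emph{every} such path meets $B'$; in particular each lost endpoint is reachable from $v$ by a length-$q$ walk passing through a vertex of $B'$ at some internal position $i\in\{1,\dots,q-1\}$. I would therefore bound the number of lost endpoints by a union bound over $i$: the walks through $B'$ at position $i$ have the form $v\rightsquigarrow u\rightsquigarrow w$ with $u\in N_G^i(v)\cap B'$ and $w\in N_G^{q-i}(u)$, so their number of endpoints is at most $|N_G^i(v)\cap B'|\cdot|N_G^{q-i}(u)|$ summed over such $u$. (Replacing paths by walks only inflates the count, which is fine for an upper bound.)

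Now I would substitute the two structural bounds. Goodness of $v$ gives $|N_G^i(v)\cap B'|<(np)^i/c^{q+1}$, while condition (3) of \Cref{goodness}, namely $\Delta(G)<2np$, gives $|N_G^{q-i}(u)|\le(2np)^{q-i}$. Hence the number of lost endpoints is at most
\[
    \sum_{i=1}^{q-1}\frac{(np)^i}{c^{q+1}}(2np)^{q-i}
    =\frac{(np)^q}{c^{q+1}}\sum_{i=1}^{q-1}2^{q-i}
    <\frac{2^q(np)^q}{c^{q+1}}
    =\frac{2^q n}{c},
\]
where I used $(np)^q=(cn^{1/q})^q=c^q n$ for $p=cn^{-1+1/q}$. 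Choosing $c\ge 2^q/\nu$ (permissible since $c\gg q,1/\nu$) makes this at most $\nu n=\nu|A|$. Consequently $v$ still reaches at least $(1-\nu)|A|-\nu|A|=(1-2\nu)|A|$ vertices of $A$ through $B-B'$, i.e.\ $v$ is $(A,B-B',q,2\nu)$-expanding, which completes the argument.

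The step that I expect to require the most care is the endpoint count: one must argue that every endpoint all of whose $B$-paths are destroyed is genuinely captured by a walk entering $B'$, so that the union bound over the entry position $i$ is valid, and that passing from paths to walks does not invalidate the bound. Everything after that is a direct insertion of the estimates from \Cref{B'} and condition (3) of \Cref{goodness}, together with the elementary identity $(np)^q=c^q n$ and the final choice of $c$.
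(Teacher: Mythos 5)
Your proof is correct and follows essentially the same route as the paper's: both take the exceptional set to be the bad vertices from \Cref{B'}, and for each remaining vertex bound the endpoints lost after deleting $B'$ by summing, over the position $k$ at which a path enters $B'$, the quantity $\frac{(np)^k}{c^{q+1}}(2np)^{q-k}$, using the maximum degree condition and then choosing $c$ large enough to make the total at most $\nu|A|$. Your only deviations are cosmetic — you make the path-to-walk overcounting step explicit and evaluate the geometric sum slightly more tightly ($2^q n/c$ versus the paper's $2^q(q-1)n/c$) — neither of which changes the argument.
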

\begin{proof}
  Let $B'\subseteq B$ be of size $|B'|<\frac{n}{c^{q+2}}$. Let $X\subset A$ be the set of bad vertices described in Lemma \ref{B'} for the set $B'$, so that $|X|\leq n^{1-1/(2q)}$. We prove that all vertices in $A-X$ are $(A, B-B',q, 2\nu)$-expanding.
    \par Let $v\in A-X $. Thanks to condition $(3)$ from Definition \ref{goodness} we have that $\Delta(G)<2np$, and from Lemma \ref{B'} we know that there are at most $\frac{(np)^k}{c^{q+1}}$ vertices in $N_G^k(v)\cap B'$ for each $k\in[q-1] $. Using these two facts we obtain an upper bound for the number of vertices in $A$ reachable from $v$ by paths of length $q$ which contain vertices in $B'$:
    \begin{equation*}
        \sum_{k=1}^{q-1}\frac{(np)^k}{c^{q+1}}(2np)^{q-k}\leq (q-1)\frac{(2np)^q}{c^{q+1}}=\frac{2^q (q-1)}{c}n
    \end{equation*}
    where the $k^{\text{th}}$ term in the sum is an upper bound on the number of vertices in $A$ which can be reached from $v$ by paths whose $k^\text{th}$ vertex is in $B'$. If we choose $c$ so that $2^q(q-1)/c<\nu$, we are done as now at most $\nu|A|$ new vertices in $A$ start being unreachable by removing $B'$, i.e.\ $v$ is $(A,B-B',q,\nu+\nu)$-expanding.
\end{proof}

We finish our preparation for the proof of the main theorem by showing that an appropriate combination of good pairs contains the subdivisions of bounded degree graphs of linear size.
\begin{Lemma}\label{algorithm}
For every two integers $q,D$ and integers $n\gg c\gg q,D$ the following holds. Let $G$ be a graph and $A, B_1, . . . , B_D \subset  V(G)$ be disjoint subsets such that for each $i\in [D]$ we have that $(A,B_i)$ is 
$(q,1/4D, n, c)$-good in $G[A\cup B_i]$. Then $G$ contains a copy of the $q$-subdivision $H^q$ of any graph $H$ with at most $n/c^{q+4}$ vertices and $\Delta(H)\leq D$.
\end{Lemma}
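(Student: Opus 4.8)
The plan is to embed $H^q$ into $G$ greedily, processing the branch vertices of $H$ (the images of $V(H)$) one at a time and routing each subdivision path as soon as both of its endpoints have been placed. Write $V(H)=\{h_1,\dots,h_m\}$ with $m\le n/c^{q+4}$, and embed them in this order, mapping each $h_j$ to a vertex $v_j\in A$, while the $q-1$ internal vertices of each subdivided edge will be placed inside the appropriate $B_i$. Throughout, maintain a global forbidden set $B'=\bigcup_i B'_i$, where $B'_i\subseteq B_i$ records the interior vertices already used on color-$i$ paths. Since $H$ has at most $Dm/2$ edges, each contributing $q-1$ internal vertices, we have $|B'|\le (q-1)Dm/2\le (q-1)Dn/(2c^{q+4})<n/c^{q+2}$ for $c\gg q,D$; in particular each $|B'_i|<n/c^{q+2}$ at all times, so \Cref{robust} is applicable to every $(A,B_i)$ at every stage of the process.

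When processing $h_j$, let its already-embedded neighbors be $h_{k_1},\dots,h_{k_s}$ with $s\le D$ (these are the $h_k$ with $k<j$ adjacent to $h_j$). Because $s\le D$, I would assign the $s$ edges $h_{k_t}h_j$ pairwise distinct colors $i_1,\dots,i_s\in[D]$; this local assignment is the sole use of having $D$ separate sets, and since distinct colors route through distinct $B_i$'s, the resulting paths are automatically internally disjoint from one another, and forbidding $B'$ keeps them disjoint from all earlier paths. For each $t$, let $R_t\subseteq A$ be the set of vertices reachable from $v_{k_t}$ by a length-$q$ path through $B_{i_t}-B'_{i_t}$. Assuming each source $v_{k_t}$ is $(A,B_{i_t}-B'_{i_t},q,2\nu)$-expanding with $\nu=1/(4D)$, \Cref{robust} gives $|R_t|\ge(1-2\nu)|A|$, so by a union bound $\big|\bigcap_{t}R_t\big|\ge |A|\big(1-2s\nu\big)\ge |A|/2$. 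Choosing $v_j$ in this intersection, avoiding the fewer than $m$ previously used images, yields for each $t$ a valid $v_{k_t}$–$v_j$ path; adding their interiors to $B'$ completes the step. Vertices of $H$ with no earlier neighbor are simply placed in $A$ with no routing.

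The one genuinely delicate point — and the main obstacle — is guaranteeing that every source vertex $v_{k_t}$ is still expanding at the moment it is used, even though the forbidden set has grown since $v_{k_t}$ was placed. Here I would exploit monotonicity: for fixed $k$, the neighborhoods $N^k(v)\cap B'_i$ only grow as $B'_i$ grows, so the exceptional (non-expanding) set of \Cref{robust} is monotone increasing in $B'_i$. Consequently, writing $B'^{\mathrm{fin}}_i$ for the final color-$i$ forbidden set and $X^{\mathrm{fin}}$ for the union over $i\in[D]$ of the corresponding exceptional sets, it suffices to keep all branch vertices out of $X^{\mathrm{fin}}$: any $v_{k_t}\notin X^{\mathrm{fin}}$ lies outside the exceptional set for every intermediate $B'_{i_t}\subseteq B'^{\mathrm{fin}}_{i_t}$, hence is $(A,B_{i_t}-B'_{i_t},q,2\nu)$-expanding whenever it is used. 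By \Cref{robust} each of the $D$ exceptional sets has size at most $n^{1-1/(2q)}$, so $|X^{\mathrm{fin}}|\le Dn^{1-1/(2q)}=o(n)$, a quantity determined a priori by the bound $|B'|<n/c^{q+2}$ and entirely negligible against the $|A|/2$ candidate images available at each step. Thus I would simply forbid $X^{\mathrm{fin}}$ (equivalently, discard the $o(n)$ vertices that could ever become exceptional) throughout the greedy procedure, which leaves ample room at every step.

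Finally, one checks the numerics: with $\nu=1/(4D)$ the goodness hypothesis $(q,1/4D,n,c)$ feeds \Cref{robust} to produce the $2\nu=1/(2D)$ expansion used above, the threshold $n/c^{q+2}$ is never exceeded, and the total number of images and interior vertices, $O((q-1)Dm)=O\big((q-1)Dn/c^{q+4}\big)$, is far below the capacities of $A$ and of each $B_i$. Assembling the $v_j$'s and the routed paths gives a copy of $H^q$ in $G$, completing the argument. I expect the upkeep of source expansion across the growing forbidden set (resolved via the monotonicity observation) to be the crux, with the disjointness of the $\le D$ simultaneously routed paths handled cleanly by the distinct-color assignment.
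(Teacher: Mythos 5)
Your overall framework (greedy embedding into $A$, distinct colors for the at most $D$ paths routed at each step, per-color forbidden sets kept below $n/c^{q+2}$, and the union bound leaving $|A|/2$ candidate images) matches the paper's algorithm, and you correctly isolated the crux: a source vertex embedded early may cease to be expanding as the forbidden sets grow. But your resolution of that crux is circular, and this is a genuine gap. The set $X^{\mathrm{fin}}$ is the exceptional set of Lemma~\ref{robust} with respect to the \emph{final} forbidden sets $B'^{\mathrm{fin}}_i$, and those final sets are produced by the very run of the algorithm that is supposed to avoid $X^{\mathrm{fin}}$: which paths get used (hence what $B'^{\mathrm{fin}}_i$ is) depends on where the branch vertices were placed, which in your scheme depends on $X^{\mathrm{fin}}$. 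Only the \emph{size bound} $Dn^{1-1/(2q)}$ is known a priori; the set itself is not, so ``simply forbid $X^{\mathrm{fin}}$ throughout'' is not an executable step. The natural repair---forbidding every vertex that could become exceptional for \emph{some} admissible $B'$, i.e.\ taking the union of exceptional sets over all $B'$ with $|B'|<n/c^{q+2}$---fails badly: by the mechanism of Lemma~\ref{B'}, a vertex $v$ lies in that union as soon as $|N^k(v)\cap B_i|\geq (np)^k/c^{q+1}$ for some $k\leq q-1$ (take $B'$ inside that neighborhood; since $c^q<n^{1/q}$ such a $B'$ is admissible), and in the graphs this lemma is applied to, essentially every vertex of $A$ has this property. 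So that union is not $o(n)$; Lemma~\ref{robust} is a ``for each fixed $B'$'' statement and cannot be upgraded to a uniform one.

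The paper closes exactly this gap with a dynamic device rather than an a priori one: after each step it \emph{discards} (un-embeds) any vertex of $H$ whose image has become non-expanding with respect to the \emph{current} occupied sets, keeps that image permanently marked as used, and re-embeds the vertex later. The monotonicity you observed is then used only in the termination analysis: a discarded image stays non-expanding as the occupied sets grow, so every image ever discarded is non-expanding with respect to the \emph{final} occupied sets, and a single application of Lemma~\ref{robust} to those final sets bounds the total number of discards by $Dn^{1-1/(2q)}$. A counting argument (occupied images minus embedded vertices equals discards) then shows the process terminates within $n/c^{q+3}$ iterations. If you replace your a priori forbidding of $X^{\mathrm{fin}}$ by this discard-and-recount mechanism, the rest of your argument goes through essentially unchanged.
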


\begin{proof}
Let $n\gg c$ be large enough integers which we get from Lemma \ref{robust} for 
$\nu:=1/4D.$ We will embed $H^q$ into $G$ by embedding the vertices of $H$ one by one into $A$, and connecting them to all previously embedded neighbors from $H$ by paths of length $q$ whose internal vertices go through different $B_i$. Lemma \ref{robust} will tell us that the small number of vertices in each $B_i$ which are used during this embedding process will not be able to prevent us from finding new paths for the new vertices that we embed. We construct an embedding $\varphi:H^q\hookrightarrow G$ as follows. For each $x\in V(H^q)$ we denote by $\Bar{x}$ the image of $x$, i.e.\ $\Bar{x}=\varphi(x)$.\\
\begin{itemize}
    \item [1)] Let $S:=\varnothing$ be the set of \emph{occupied} vertices in $A$ and $S_t:=\varnothing$ the set of \emph{occupied} vertices in $B_t$ for each $t\in [D]$. Set $Y:=\varnothing$ to be the set of vertices from $V(H)$ which are already embedded. 
    \item  Repeat Steps 2, 3 and 4 until $Y=V(H)$:\\
  
    \item [2)] Take any $v\in V(H)-Y$. If $v$ does not have any already embedded neighbors, then let $\Bar{v}$ be an arbitrary vertex in $A-S$. Set $Y:=Y\cup \{v\}$, $S:=S\cup \{\Bar{v}\}$. 
    
     \item [3)] Else (if $v$ has at least one already embedded neighbor), let $N_v$ be the set of those neighbors, i.e.\ vertices adjacent to $v$ in $H$ which are in $Y$. Assign a distinct integer $t(w)\leq D$ to each vertex $w\in N_v$. Find a vertex $a\in A-S$ such that
     for every $w\in N_v$ there is a path $P=P_{t(w)}$ with the following properties\footnote{Later we show that this is possible.}:
     
     \begin{itemize}
        \item $P$ is of length $q$; it starts at $\Bar{w}$ and ends in $a$, while all its internal vertices in $B_{t(w)}$.
         \item None of the vertices of $P$ is occupied, i.e.\ $S_{t(w)}\cap V(P)=\varnothing$.
     \end{itemize}
     Now set $$\Bar{v}:=a,\qquad Y:=Y\cup \{v\}, \qquad S:=S\cup \{a\}, \qquad  S_{t(w)}:=S_{t(w)}\cup V(P_{t(w)})-\{a,\Bar{w}\},$$ for each $w\in N_v$.
     \item [4)] If there exists a vertex $v\in Y$ such that $\Bar{v}$ is not $(A,B_i-S_i,q,2\nu)$-expanding for some $i\in[D]$, then do $Y:=Y-\{v\}$ for every such vertex, i.e.\ we discard it from the set of already embedded vertices. \textit{Notice that we do not delete} $\Bar{v}$ \textit{from} $S$, \textit{meaning that this vertex in $G$ is now marked as \emph{useless} and cannot be used for still non-embedded vertices. The same holds for all the vertices which were lying on paths starting at $\Bar{v}$, as they also stay \emph{occupied}}. 
\end{itemize}

In order to complete the proof we need to prove that it is always possible to carry out step $3$ of the algorithm, and that the algorithm terminates at some point.

For the former task, note that when we enter the repeat loop, the images of all vertices in $Y$ are $(A,B_i-S_i,q,2\nu)$-expanding for every $i\in[D]$, thanks to step $4$ of the algorithm. This means that at most $2\nu|A|=\frac{|A|}{2D}$ vertices in $A$ are not reachable by paths starting at $\Bar{w}$ with all internal vertices in $B_{t(w)}-S_{t(w)}$, for each already embedded neighbor $w\in N_v$. Furthermore, as we will prove in the next paragraph, we enter the repeat loop at most $|A|/c^{q+3}$ times, so the number of occupied vertices in $A$ is also at most $|A|/c^{q+3}$. Recalling that $|N_v|\leq D$, we conclude that there are at most $D\cdot\frac{|A|}{2D}+\frac{|A|}{c^{q+3}}<|A|$ occupied or non-reachable vertices, hence there is a non-occupied vertex $a\in A$ connected by paths (as described in step 3) going through $B_{t(w)}-S_{t(w)}$ to each $\Bar{w}$. \par 
To prove that the algorithm terminates, we will show that we are done after entering the repeat loop at most $n/c^{q+3}$ times. For the sake of contradiction, assume that after $n/c^{q+3}$ steps, there are still vertices in $V(H)$ which are not embedded, i.e.\ $|Y|<|V(H)|\leq n/c^{q+4}$. Since $S$ increases by $1$ each time we enter the repeat loop, the number of vertices which are discarded in step $4$ is at least $|S|-|Y|>n/c^{q+3}-n/c^{q+4}>Dn^{1-1/(2q)}$ for $n$ chosen large enough in the beginning. Note that vertices are discarded if they are not $(A,B_i-S_i,q,2\nu)$-expanding and furthermore, that they continue to be non-expanding as the size of $S_i$ is non-decreasing. But by Lemma \ref{robust}, the number of vertices in $A$ which are not $(A,B_i-S_i,q,2\nu)$-expanding is at most $n^{1-1/(2q)}$ for every $i$ because $|S_i|\leq q|S|<\frac{qn}{c^{q+3}}<\frac{n}{c^{q+2}}$, so there are at most $Dn^{1-1/(2q)}$ vertices in $A$ which could have been discarded, a contradiction.
\end{proof}
\subsection{Proof of \Cref{main}}
Now we put everything together to prove our main result. Thanks to \Cref{algorithm} it will be enough to show the existence of a structure described in the lemma, with the appropriate parameters. For the existence we will use \Cref{Graph} and \Cref{angelika2}.
\main*

\begin{proof}
In order to use \Cref{Graph}, we first fix the relevant parameters: $h=1/q$, $K:=(q-1)D+1$. Now we specify $\varepsilon$. Note that $c'=c'(r,K)$ in \Cref{Graph} is already determined, so we can choose $\varepsilon$ so that $0<\varepsilon<\varepsilon_0(q,\delta,\gamma,\nu,c')$ given by Corollary \ref{angelika2} where $\delta=\gamma=\nu=1/8D$. Let also $C=C(q,\nu)$ from \Cref{angelika2}. Finally, let $p=cn^{-1+1/q}$, where $c$ is a large enough constant.

 Take any $r$-coloring of $G$. We want to prove that there is a color class which contains a copy of $H^q$. Thanks to Proposition \ref{Graph} we whp get $K=(q-1)D+1$ sets 
 $$\{V_1\}\cup\Big\{V_k^t \mid k\in \{2,...,q\},\ t\in [D]\Big\}$$ 
 such that for each fixed $t\in [D]$ the sets  
 $\{V_1\}\cup\Big\{V_k^t \mid k\in \{2,...,q\}\Big\}$ form a chain in $P_q(\mu n, (\mu n)^2p', \varepsilon)$ of the same color for parameters as specified in the proposition. Notice that the proposition tells us that each pair of sets among these $K$ sets satisfies some properties, but we only use the edges of the pairs which make this star-like configuration, like in Figure 1. This gives us $D$ chains which all share the first set of vertices $V_1$. 
 

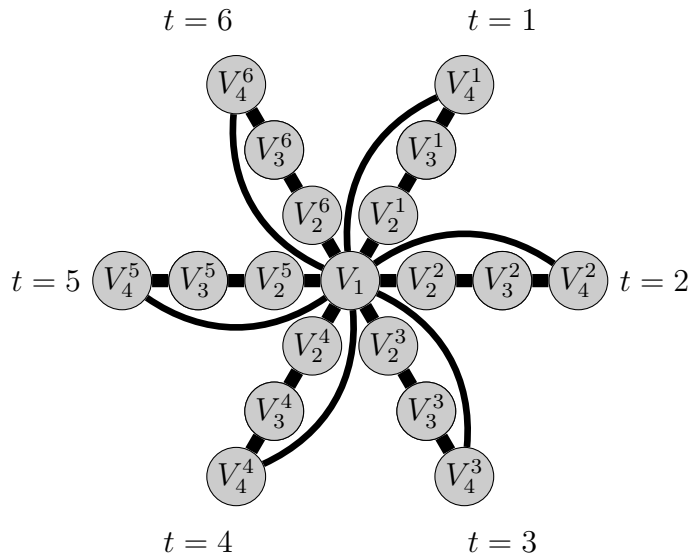
\begin{figure}[!b]
    \centering
   \begin{tikzpicture}
   
    \tikzstyle{vertex}=[circle, draw, fill=black!20,
                         inner sep=0pt, minimum width=22pt];
    \node[vertex] (v00) at (0,0){$V_1$};
   
    \foreach \i in {1,...,6}
    {
    \foreach \j in {2,3}
    {
   
    \node[vertex] (v\i\j) at (120-\i*360/6:\j-1){$V^\i_{\j}$} ;
    \def\k{\the\numexpr\j+1}    
    \node[vertex] (v\i) at (120-\i*360/6:\j){$V^\i_{\k}$} ;
      \draw[line width=5pt] (v\i\j)--(v\i);
    } 
    \draw[line width=5pt] (v00)--(v\i2);
    }
    \foreach \i in {1,...,6}
    {\draw[line width=2.5pt] (v00) to [bend left=35](v\i);
    \node[] (v) at (120-\i*360/6:4) {$t=\i$};
    }

   \end{tikzpicture}
   \caption{The monochromatic configuration from \Cref{Graph} for $q=4,D=6$}
\end{figure}

  Furthermore, by making use of Corollary \ref{angelika2}
 we get that each of these chains is also expanding with respect to $\delta,\gamma, \nu, C$. Notice that the size of the sets in the chain is $\mu n$ and the number of edges between each pair is $(\mu n)^2p'$, which implies that all but at most $(\delta+\gamma)|V_1|$ singletons in $V_1$ are $(1-\nu)$-spanning if $c$ is chosen large enough. Indeed, this is true by Remark \ref{singletons} and the fact that:
\[
C\frac{(\mu n)^{q+1}}{((\mu n)^2p')^{q}}=C\frac{(\mu n)^{q+1}}{(c'c\mu^2n^{1+1/q})^q}=\frac{C}{c'c\mu^{q-1}}<1
\]
where the last inequality is true as $c$ can be chosen large enough in the beginning since $C,c'$ and $\mu$ depend only on $D,q,r$. Note that here it was crucial that we chose $p=\Omega(n^{-1+1/q})$.
\par 
 Now let $B_t=\bigcup_{k=2}^q V_k^t$ for $t\in[D]$.
 We will show the existence a large set $A\subset V_1$ such that for all $t\in [D]$, every $a\in A$ is $(A,B_t,q,\nu)$-expanding.
 First note that if for a fixed $t$ a singleton $\{v\}$ is $(1-\nu)$-spanning in the chain induced by $\{V_1\}\cup\Big\{V_k^t \mid k\in \{2,...,q\}\Big\}$, then $v$ is $(V_1,B_t,q,\nu)$-expanding by definition.
 Therefore, at most $(\gamma+\delta)|V_1|$ vertices in $V_1$ are not $(V_1,B_t,q,\nu)$-expanding for each fixed $t$. By removing these vertices (for each $t$) from $V_1$ we get a set $A\subset V_1$ of vertices which are all $(V_1,B_t,q,\nu)$-expanding.
 
 Since $A$ is of size at least $|V_1|(1-D(\gamma+\delta))=|V_1|(1-1/4)>|V_1|/2$, we also know that each vertex in $A$ is $(A,B_t,q,2\nu)$-expanding, for all $t\in[D]$. This is because for each $v\in A$ the proportion of non-reachable vertices (bounded by $\nu)$ can increase only by a factor of $2$ when we delete at most half of the vertices from $V_1$.
 \par We want to prove that in the monochromatic graph we found it holds that $(A,B_t)$ is $$(q,1/4D,|A|,c)\text{-good, for each }t\in [D].$$
  Then we will be done by Lemma \ref{algorithm} and by setting $\mu_0=\frac{\mu}{2c^{q+4}}$. From the previous discussion we can infer that the second property in Definition \ref{goodness} is satisfied for each pair $(A,B_t)$. The third one follows from the properties of the monochromatic configuration given by Proposition \ref{Graph}:
 \begin{align*}
        |N_{A\cup B_t}(v)|&\leq \cfrac{5}{2}\mu n p'<5|A|p'=5|A|c'cn^{-1+1/q}\\&=5|A|c'c(\mu n)^{-1+1/q}\cdot\mu^{1-1/q}<2|A|c|A|^{-1+1/q}
 \end{align*}
 where for the second inequality we use $\mu n=|V_1|<2|A|$
 and for the last $c'<1/2, \mu\leq 1/3$ and $|A|\leq \mu n$.
 The fourth condition is also satisfied, as it is a consequence of Lemma \ref{Chernoff} with $\alpha=\frac{\mu}{2c^{q+3/2}}$, for $c$ large enough. This completes the proof.
\end{proof}
\vspace{-0.5cm}
\bibliographystyle{unsrt}

\begin{thebibliography}{10}

\bibitem{bal2019new}
D. Bal and L. DeBiasio.
\newblock New lower bounds on the size-Ramsey number of a path.
\newblock {\em arXiv preprint arXiv:1909.06354}, 2019.

\bibitem{beck1983size}
J. Beck.
\newblock On size Ramsey number of paths, trees, and circuits. I.
\newblock {\em J. Graph Theory}, 7(1):115--129, 1983.

\bibitem{bollobas2001random}
B. Bollob{\'a}s. 
\newblock {\em Random graphs}.
\newblock 2nd edition, Cambridge Stud. Adv. Math. 73., Cambridge University Press, 2001.

\bibitem{bollobas1986extremal}
B. Bollob{\'a}s.
\newblock {\em Extremal graph theory with emphasis on probabilistic methods}.
\newblock CBMS Regional Conference Series in Mathematics 62, Amer. Math. Soc., 1986.

\bibitem{clemens2019size}
D. Clemens, M. Jenssen, Y. Kohayakawa, N. Morrison,
  G.~O. Mota, D. Reding and B. Roberts.
\newblock The size-Ramsey number of powers of paths.
\newblock {\em J. Graph Theory}, 91(3):290--299, 2019.

\bibitem{conlon2015recent}
D. Conlon, J. Fox and B. Sudakov.
\newblock Recent developments in graph Ramsey theory.
\newblock {\em Surveys in combinatorics}, 49--118, 2015.

\bibitem{dellamonica2012size}
D. Dellamonica.
\newblock The size-Ramsey number of trees.
\newblock {\em Random Structures Algorithms}, 40(1):49--73, 2012.

\bibitem{donadelli2005note}
J. Donadelli, P.~E. Haxell and Y. Kohayakawa.
\newblock A note on the size-Ramsey number of long subdivisions of graphs.
\newblock {\em RAIRO-Theor. Inform. Appl.}, 39(1):191--206,
  2005.

\bibitem{dudek2015alternative}
A. Dudek and P. Pra{\l}at.
\newblock An alternative proof of the linearity of the size-Ramsey number of
  paths.
\newblock {\em Combin. Probab. Comput.}, 24(3):551--555, 2015.

\bibitem{dudek2017some}
A. Dudek and P. Pra{\l}at.
\newblock On some multicolor Ramsey properties of random graphs.
\newblock {\em SIAM J. Discrete Math.}, 31(3):2079--2092, 2017.

\bibitem{dudek2018note}
A. Dudek and P. Pra{\l}at.
\newblock Note on the multicolour size-Ramsey number for paths.
\newblock {\em Electron. J. Combin.}, 25(3), 2018.

\bibitem{erdHos1978size}
P. Erd{\H{o}}s, R.~J. Faudree, C.~C. Rousseau and R.~H. Schelp.
\newblock The size Ramsey number.
\newblock {\em Period. Math. Hungar.}, 9(1-2):145--161, 1978.

\bibitem{friedman1987expanding}
J. Friedman and N. Pippenger.
\newblock Expanding graphs contain all small trees.
\newblock {\em Combinatorica}, 7(1):71--76, 1987.

\bibitem{gerke2007small}
S. Gerke, Y. Kohayakawa, V. R{\"o}dl and A.
  Steger.
\newblock Small subsets inherit sparse $\varepsilon$-regularity.
\newblock {\em J. Combin. Theory Ser. B}, 97(1):34--56, 2007.

\bibitem{han2018multicolour}
J. Han, M. Jenssen, Y. Kohayakawa, G.~O. Mota and
  B. Roberts.
\newblock The multicolour size-Ramsey number of powers of paths.
\newblock {\em J. Combin. Theory Ser. B}, 145: 359--375, 2018.

\bibitem{haxell1995induced}
P.~E. Haxell, Y. Kohayakawa and T. {\L}uczak.
\newblock The induced size-Ramsey number of cycles.
\newblock {\em Combin., Probab. Comput.}, 4(3):217--239, 1995.

\bibitem{javadi2019size}
R. Javadi, F. Khoeini, G.~R. Omidi and A. Pokrovskiy.
\newblock On the size-Ramsey number of cycles.
\newblock {\em Combin. Probab. Comput.}, 28(6):871--880, 2019.

\bibitem{kamcev2019}
N. Kam\v{c}ev,  A. Liebenau, D. Wood and L. Yepremyan.
\newblock The size Ramsey number of graphs with bounded treewidth.
\newblock {\em arXiv preprint arXiv:1906.09185}, 2019.

\bibitem{kohayakawa2019size}
Y. Kohayakawa, T. Retter and V. R{\"o}dl.
\newblock The size Ramsey number of short subdivisions of bounded degree
  graphs.
\newblock {\em Random Structures Algorithms}, 54(2):304--339, 2019.

\bibitem{krivelevich2019long}
M. Krivelevich.
\newblock Long cycles in locally expanding graphs, with applications.
\newblock {\em Combinatorica}, 39(1):135--151, 2019.

\bibitem{letzter2016path}
S. Letzter.
\newblock Path Ramsey number for random graphs.
\newblock {\em Combin., Probab. and Comput.}, 25(4):612--622, 2016.

\bibitem{pak2002mixing}
I. Pak.
\newblock Mixing time and long paths in graphs.
\newblock {\em SODA: Proceedings of the thirteenth annual ACM-SIAM symposium on Discrete algorithms},
\newblock 321--328, 2002.

\bibitem{ramsey2009problem}
F.~P. Ramsey.
\newblock On a problem of formal logic.
\newblock {\em Proc. London Math. Soc.} (2),  30(4):264--286,
  1929.

\bibitem{rodl2000size}
V. R{\"o}dl and E. Szemer{\'e}di.
\newblock On size Ramsey numbers of graphs with bounded degree.
\newblock {\em Combinatorica}, 20(2):257--262, 2000.



\end{thebibliography}

\end{document}